\newcommand{\msc}[2][2000]{%
  \let\@oldtitle\@title%
  \gdef\@title{\@oldtitle\footnotetext{#1 \emph{Mathematics subject
        classification.} #2}}%
}
\theoremstyle{plain}
\newtheorem{theo}{Theorem} [section]
\newtheorem{defi}[theo]{Definition}
\newtheorem{lem}[theo]{Lemma}
\newtheorem{cor}[theo]{Corollary}
\newtheorem{prop}[theo]{Proposition}
\theoremstyle{remark}
\newtheorem{rem}[theo]{Remark}
\newtheorem{nota}[theo]{Notation}
\newtheorem{example}[theo]{Example}
\def\R{{\mathbb R}}
\def\({\left(}
\def\){\right)}
\def\<{\left\langle}
\def\>{\right\rangle}
\def\Eq#1#2{\mathop{\sim}\limits_{#1\rightarrow#2}}
\def\Tend#1#2{\mathop{\longrightarrow}\limits_{#1\rightarrow#2}}
\def\d{{\partial}}
\def\ep{\varepsilon}
\numberwithin{equation}{section}
\begin{document}

\title[Superexponential regimes in the logarithmic heat equation]{Superexponential growth or decay in the heat equation with a
  logarithmic nonlinearity} 

\author[M. Alfaro]{Matthieu Alfaro}
\address{CNRS\\Institut Montpelli\'erain Alexander Grothendieck\\
  Univ. Montpellier 
\\CC51\\Place E. Bataillon\\34095 Montpellier\\ France}
\email{Matthieu.Alfaro@umontpellier.fr}

\author[R. Carles]{R\'emi Carles}
\address{CNRS\\Institut Montpelli\'erain Alexander Grothendieck\\
  Univ. Montpellier 
\\CC51\\Place E. Bataillon\\34095 Montpellier\\ France}
\email{Remi.Carles@math.cnrs.fr}

\begin{abstract}
We consider the heat equation with a logarithmic nonlinearity, on the
real line. For a suitable sign in front of the nonlinearity, we
establish the existence and uniqueness of solutions of the Cauchy
problem, for a well-adapted class of initial data. Explicit
computations in the case of Gaussian data lead to various scenarii
which are richer than the mere comparison with the ODE mechanism,
involving (like in the ODE case) double exponential growth or decay
for large time. Finally, 
we prove that such phenomena remain, in the case of compactly
supported initial data.   
\end{abstract}

\maketitle

 \tableofcontents

\section{Introduction}\label{s:intro}

In this work we consider the nonnegative solutions $u(t,x)$ of the heat equation with a logarithmic nonlinearity, namely 
\begin{equation}
\label{eq}
\partial _t u=\partial _{xx} u +\lambda\, u \ln \(u^2\), \quad t>0,\; x\in \R,
\end{equation}
where $\lambda >0$ is a given parameter. Our primary goal is to
investigate the large time behavior of the solutions, since this study
reveals mechanisms which seem interesting to us. Equation \eqref{eq} shares some similarities with  {\it bistable} equations modelling an {\it Allee effect} in population dynamics, but does not seem to correspond clearly to any model proposed in e.g. biology or chemistry. On the other hand,  \eqref{eq} has challenging aspects from the mathematical point of view.  Our results
may be extended to the multidimensional case, leading to a more
technical setting. We have chosen to stick to the one-dimensional
case to simplify the presentation, thus highlighting the main mechanisms.

Associated with \eqref{eq} is the following energy
\begin{equation}
\label{def-energie}
\mathcal E [u](t):=\frac 12 \int _\R (\partial _x u)^{2}(t,x)dx+\int _\R \frac \lambda 2  u^{2}\(1-\ln \(u^2\) \)(t,x)dx.
\end{equation}
Formally, solutions to \eqref{eq} satisfy 
$$
\frac{d\mathcal E [u]}{dt}=-\int _\R (\partial _t u)^{2}(t,x)dx \leq 0.
$$

Many features make \eqref{eq} interesting from a mathematical
point of view. First, the nonlinearity is not Lipschitzean, which
causes difficulties already at the level of the local Cauchy
problem. Also, the second term in the energy \eqref{def-energie} has
no definite sign, which makes  a priori estimates a delicate issue. Next, \eqref{eq} supports the 
existence of Gaussian solutions. Last, the Cauchy problem may exhibit superexponential growth or decay.

\subsection{The Cauchy problem}

Such a logarithmic nonlinearity has been introduced in Physics in the
context of wave mechanics and optics \cite{BiMy75,BiMy76}. From a mathematical
point of view, the Cauchy problem for logarithmic Schr\"odinger
equations and logarithmic wave equations have been studied in
\cite{CaHa80,CaGa-p}: in the case of the logarithmic Schr\"odinger
equation, it is shown that
a unique, global weak solution can be constructed in a subset of $H^1$
(in any space dimension), whichever the sign of $\lambda$. For the
three dimensional wave equation and a suitable sign for $\lambda$, a
similar result is available. Due to the lack of regularity of the
nonlinearity, solutions are constructed by compactness methods, and
uniqueness is a rather unexpected property: in the case of
Schr\"odinger equation, it is a consequence of an elegant estimate in complex
analysis noticed in \cite{CaHa80}, while for the three dimensional
wave equation, it follows from fine properties of the wave equation
and a general result concerning the trace (see \cite{CaHa80} or
\cite{Ha81}).  
\smallbreak

In the context of the heat equation like \eqref{eq}, the presence of a
logarithmic nonlinearity has been considered in \cite{ChLuLi15}, in
the case of a bounded domain $\Omega$, with Dirichlet boundary conditions. They
construct global solutions in $H^1_0(\Omega)$, and exhibit some
classes of solution growing or decaying  (at least) exponentially in
time, thanks to variational arguments (potential well method). On the other hand, it seems very
delicate, if possible, to 
construct a solution to \eqref{eq} by compactness methods on the whole line
$\R$. Also, uniqueness is missing in the Cauchy theory developed in
\cite{ChLuLi15}. We will see that this issue can be overcome by
changing functional spaces in which the Cauchy problem is studied. 

\begin{defi}[Notion of solution]\label{def:sol-log}
  Let $u_0$ be  continuous and bounded, with continuous and bounded
  derivative, and bounded and piecewise continuous second
  derivative. A (global) solution to \eqref{eq} starting from $u_0$ is a
  function $u:[0,\infty)\times \R\to \R$ which is continuous and bounded on
  $[0,T]\times \R$, for which $u_t$, $u_x$ and $u_{xx}$ exist and
  are continuous on $(0,T]\times \R$, such that $u(t,x)$ solves \eqref{eq} on $(0,T]\times \R$ (for any $T>0$), and $u_{\mid t=0}=u_0$.  In addition, we require that
  $u(t,x)$ is uniformly bounded as $|x|\to \infty$ for $t\in [0,T]$. 
\end{defi}

\begin{prop}[Global well-posedness for \eqref{eq}]\label{prop:cauchy}
Let  $u_0\ge 0$ be as in Definition ~\ref{def:sol-log}. Then \eqref{eq} has a unique solution $u$ starting from $u_0$, in
the sense of Definition~\ref{def:sol-log}. 
\end{prop}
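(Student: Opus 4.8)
The plan is to prove existence and uniqueness by separate arguments whose common ingredient is that, although $f(s):=s\ln(s^2)$ is not Lipschitz at the origin, it is \emph{Osgood continuous}: on each bounded interval $[0,M]$ there is $C_M$ with
\[
|f(a)-f(b)|\le C_M\,\omega(|a-b|),\qquad
\omega(r):=r\bigl(1+|\ln r|\bigr),
\]
where $\omega$ is nondecreasing with $\int_{0}\frac{dr}{\omega(r)}=+\infty$. I would first record this elementary estimate, together with its $\ep$-uniform analogue for the regularized nonlinearities $f_\ep(s):=s\ln(s^2+\ep^2)$ used below, since it drives both halves of the proof.

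\textbf{Existence.} I would regularize, the map $f_\ep$ being smooth and Lipschitz on bounded sets, and solve
\[
\partial_t u_\ep=\partial_{xx}u_\ep+\lambda f_\ep(u_\ep),\qquad u_\ep|_{t=0}=u_0,
\]
by a contraction argument on the Duhamel formulation in $C_b([0,T]\times\R)$. Uniform bounds and global existence then come from comparison: since $f_\ep(0)=0$, the constant $0$ is a subsolution, so $u_\ep\ge0$; and, for $\ep\le1$, the spatially homogeneous solution $y$ of $\dot y=\lambda y\ln(y^2+1)$ with $y(0)=\max(\|u_0\|_{L^\infty},1)$ is a supersolution, so $0\le u_\ep\le y(t)$ on $[0,T]\times\R$ uniformly in $\ep$ (the corresponding barrier for \eqref{eq} itself being the explicit double exponential $y(0)^{\exp(2\lambda t)}$). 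The decisive point is that, even though $f_\ep'$ is unbounded as $\ep\to0$, the source $\lambda f_\ep(u_\ep)$ stays uniformly bounded and carries the $\ep$-independent modulus $\omega$; interior parabolic ($L^p$ and Schauder) estimates then give $\ep$-uniform local bounds on $u_\ep$ and its derivatives on $(0,T]\times\R$. Arzel\`a--Ascoli produces a locally uniformly convergent subsequence whose limit $u$ solves \eqref{eq} classically, satisfies $0\le u\le y(t)$, attains $u_0$ continuously at $t=0$, and is uniformly bounded as $|x|\to\infty$.

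\textbf{Uniqueness.} Let $u_1,u_2$ be two solutions in the sense of Definition~\ref{def:sol-log}, both bounded by some $M$ on $[0,T]\times\R$. Each satisfies the variation-of-constants identity
\[
u_i(t)=e^{t\partial_{xx}}u_0+\lambda\int_0^t e^{(t-s)\partial_{xx}}f\bigl(u_i(s)\bigr)\,ds,
\]
which I would justify from their classical regularity by differentiating $s\mapsto e^{(t-s)\partial_{xx}}u_i(s)$. Subtracting and using that $e^{\tau\partial_{xx}}$ is an $L^\infty$-contraction, the quantity $\phi(t):=\|u_1(t)-u_2(t)\|_{L^\infty}$ satisfies
\[
\phi(t)\le\lambda\int_0^t\bigl\|f(u_1(s))-f(u_2(s))\bigr\|_{L^\infty}\,ds
\le \lambda C_M\int_0^t\omega\bigl(\phi(s)\bigr)\,ds,
\]
where the last bound uses the Osgood estimate pointwise and the monotonicity of $\omega$. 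Since $\phi(0)=0$ and $\int_{0}dr/\omega(r)=+\infty$, Osgood's lemma forces $\phi\equiv0$, i.e.\ $u_1\equiv u_2$. Working in $L^\infty$ rather than $L^2$ is precisely what makes the argument valid for merely bounded (not necessarily integrable) data on the whole line.

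\textbf{Main obstacle.} I expect the crux to be the $\ep$-uniform regularity required to pass to the limit in the existence step: one must extract classical estimates for $u_\ep$ on $(0,T]\times\R$ from bounds that cannot control $f_\ep'$. The way around it is to base every estimate on the source $f_\ep(u_\ep)$ itself, using that $s\ln(s^2+\ep^2)$ is uniformly bounded and Osgood continuous with an $\ep$-independent modulus, so that the smoothing of the heat kernel upgrades uniform $L^\infty$ control into the uniform H\"older and Schauder control needed for compactness. Justifying the Duhamel identity for solutions in the sense of Definition~\ref{def:sol-log} is the other point deserving care, but it is routine for bounded classical solutions with bounded continuous source.
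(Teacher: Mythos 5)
Your proposal is correct in outline, but it takes a genuinely different route from the paper's. The paper does not regularize: it observes that $f(u)=\lambda u\ln(u^2)$ is $\alpha$-H\"older continuous for every $\alpha\in(0,1)$ and, for $\lambda>0$, \emph{upper} Lipschitz continuous (one-sided Lipschitz), and then invokes the general theory of \cite{Me13,MeNe15} in the class ${\rm BPC}^2(\R)$: H\"older continuity yields a maximal solution (Theorem~\ref{theo:existence}), upper Lipschitz continuity yields both the comparison principle (Theorem~\ref{theo:comparaison}) and uniqueness (Theorem~\ref{theo:unicite}), and globalization is then exactly your ODE barrier $0\le u\le e^{\ln\|u_0\|_{L^\infty}e^{2\lambda t}}$. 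Your two substitutions are sound. For uniqueness, the Osgood bound with $\omega(r)=r(1+|\ln r|)$ does hold on $[0,M]$ (write $f(a)-f(b)=2(a-b)\ln a+2b\ln(a/b)$, use $\ln(1+x)\le x$ and $|\ln a|\le C_M+|\ln(a-b)|$ since $0\le a-b\le a$), and Osgood's lemma closes the argument; note that this route is insensitive to the sign of $\lambda$, whereas upper Lipschitz continuity of $f$ fails for $\lambda<0$, so your uniqueness is in that respect more robust. What it does \emph{not} deliver is the comparison principle itself, which the paper gets for free from upper Lipschitz continuity and leans on throughout (Corollaries~\ref{cor:comp-1} and~\ref{cor:comp-gauss}, Theorems~\ref{th:donnee-pte} and~\ref{th:threshold}); in your scheme comparison is only directly available for the regularized problems and must be transferred to the limit for each barrier. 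Two technical points deserve the care you flag: (i) Definition~\ref{def:sol-log} does not assume $u_{xx}$ bounded in $x$, so rather than differentiating $s\mapsto e^{(t-s)\d_{xx}}u(s)$ it is cleaner to run a Phragm\'en--Lindel\"of maximum-principle argument on $u_1-u_2$ (using precisely the ``uniformly bounded as $|x|\to\infty$'' clause of the definition) to reach $\|u_1(t)-u_2(t)\|_{L^\infty}\le\lambda\int_0^t\|f(u_1(s))-f(u_2(s))\|_{L^\infty}\,ds$, which feeds the same Osgood inequality; (ii) the $\ep$-uniform interior Schauder estimates do go through, because $f_\ep(s)=s\ln(s^2+\ep^2)$ satisfies the same Osgood bound uniformly in $\ep\le1$ and is therefore uniformly $C^\beta$ for every $\beta<1$ on bounded sets, so the source controls itself once the heat kernel has produced a uniform Lipschitz bound in $x$. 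In short, your approach buys a self-contained construction and a sign-independent uniqueness proof, at the price of redoing the parabolic machinery and the comparison principle that the paper imports wholesale.
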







\subsection{Superexponential growth vs. decay} 

In
\cite{JiYiCa16}, the presence of the logarithmic nonlinearity is
motivated as some limiting case for a nonlinearity of the form
$\lambda u^{1+\ep}$ in the limit $\ep\to 0$ (in the growth regime $u\to \infty$), still in a bounded domain
$\Omega\subset \R^N$ with Dirichlet boundary conditions. The authors show in
particular that time periodic solutions are highly unstable, in the
sense that a small perturbation of the initial data can lead to double
exponential growth or double exponential decay in time, see \cite[Theorem 1.1]{JiYiCa16}. Some of our results are qualitatively similar (superexponential
growth or decay), with the Gaussian steady state \eqref{def-varphi}
acting as a separation comparable to the time periodic solutions in
\cite{JiYiCa16}. Nevertheless, let us mention two main
differences. First, our results are valid on the whole line $\R$, and
not (only) on a bounded domain. On the other hand, we provide in Section \ref{s:data} initial data leading to superexponential growth or decay but that cannot be handled by \cite[Theorem 1.1]{JiYiCa16}. Roughly speaking, as can be seen from the proof, initial data of \cite[Theorem 1.1]{JiYiCa16} are multiples of the separating time periodic solution --- which is comparable to the present Remark \ref{rem:dichotomy}--- whereas initial data in Section \ref{s:data} are allowed to \lq\lq cross'' the separating Gaussian steady state \eqref{def-varphi} (see also Corollary \ref{cor:comp-1}).

\smallbreak

Let us recall that, in his seminal work \cite{Fuj-66}, Fujita considered solutions
$u(t,x)$ to the nonlinear heat equation 
\begin{equation}
\label{eq-fujita}
\partial _t u=\Delta u+u^{1+p},\quad t>0,\; x\in \R^{N},
\end{equation}
supplemented with a nonnegative and nontrivial initial data. For $p>0$
solutions of the underlying ordinary differential equation (ODE)
problem --- namely $\frac{dn}{dt}=n^{1+p}$, $n(0)=n_0>0$ --- blow up in
finite time. The dynamics of the partial differential equation
\eqref{eq-fujita} is more complex 
and rich. Precisely, there is a critical exponent $p_F:=\frac 2 N$,
referred to as the {\it Fujita exponent}, such that: If $0<p\leq p_F$
then any solution blows up in finite time, like those of the ODE. On the
other hand, if $p>p_F$ there is a balance between diffusion and
reaction. Solutions with large initial data blow up in finite time
whereas solutions with small initial data are global in time and go
extinct as $t\to\infty$. Those facts are proved in \cite{Fuj-66},
except the critical case $p=p_F$ which is studied in \cite{Hay-73}
when $N=1,2$, in \cite{Kob-Sir-Tun-77} when $N\geq 3$, and in
\cite{Wei-81} via a direct and simpler approach.  

Concerning equation \eqref{eq}, the underlying ODE problem
\begin{equation}
\label{ode}
\frac{dn}{dt}=2\lambda n \ln n, \quad n(0)=n_0>0,
\end{equation}
is globally solved as
\begin{equation}
\label{sol-ode}
n(t)=e^{(\ln n_0)e^{2\lambda t}}.
\end{equation}
As $t\to \infty$, $n(t)\to 0$ if $0<n_0<1$ (extinction) whereas
$n(t)\to \infty$ if $n_0>1$ (blow up in infinite time). Hence the
dynamics of the ODE \eqref{ode} already shares some similarities  with
the Fujita supercritical regime $p>p_F$ for the PDE
\eqref{eq-fujita}. The dynamics of the PDE \eqref{eq} is much richer
than the mechanism of \eqref{ode}, and our main goal is to understand its long time behavior for initial data $u_0$ ``crossing'' the equilibrium 1.

Notice also that the composition of exponential functions in
\eqref{sol-ode} is a strong indication that possible extinction or
growth phenomena are strong, and can thus hardly be
captured numerically. In practice, the superexponential growth may
appear like a blow-up phenomenon, while the superexponential decay may
be understood like a finite time extinction.  

 When possible, a second goal is to estimate
these rates of convergence. 

\medskip 

To give a flavor of the results established in the sequel, recall that the authors in \cite{ChLuLi15}  consider
\eqref{eq} (possibly in multidimension) on a bounded domain, with
Dirichlet boundary conditions. By variational arguments, they exhibit
classes of initial data whose evolution under \eqref{eq} leads to  (at
least) exponential decay in $L^2$, and another class of initial data
whose evolution leads to unboundedness of the $L^2$ norm in large
time. As a consequence of our analysis on the whole line $\R$, we actually provide more precise information on those phenomena for the equation on a bounded domain, say $(\alpha,\beta)$. 

\begin{prop}[Growth/decay rates in  a bounded domain]\label{prop:Omega} 
  Let $\alpha<\beta$ and $\Omega=(\alpha,\beta)$. Consider the mixed
  problem
\begin{equation}\label{eq:Omega}
\left\{
  \begin{aligned}
    \d_t u &= \d_{xx}u + u\ln (u^2),\quad t>0,\ x\in \Omega,\\
u_{\mid \d \Omega}&=0,\quad t>0,\\
u_{\mid t=0}&=u_0.
  \end{aligned}
\right.
\end{equation}
There are nonnegative initial data $u_0\in C^1_c(\Omega)$ such that
\eqref{eq:Omega} has a unique solution, whose $L^2$ and $L^\infty$ norms  decay at 
least like a double exponential in time,
\begin{equation*}
  \exists C,\eta>0,\quad  \|u(t)\|_{L^2(\Omega)}\le
  |\Omega|^{1/2}\|u(t)\|_{L^\infty(\Omega)}\le C e^{-\eta e^{2t}}. 
\end{equation*}
There are nonnegative initial data $u_0\in C^1_c(\Omega)$ such that
\eqref{eq:Omega} has a unique solution, whose $L^2$ and $L^\infty$ norms grow at 
least like a double exponential in time, 
\begin{equation*}
  \exists C,\eta>0,\quad  |\Omega|^{1/2}\|u(t)\|_{L^\infty(\Omega)}
  \ge  \|u(t)\|_{L^2(\Omega)}\ge C e^{\eta e^{2t}}.
\end{equation*}
\end{prop}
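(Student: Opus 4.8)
The plan is to combine a comparison principle for \eqref{eq:Omega} with explicit barriers built from the ODE \eqref{sol-ode} and from the Gaussian steady state $\varphi$ of \eqref{def-varphi}. Existence and uniqueness of a solution to \eqref{eq:Omega} is obtained by adapting Proposition~\ref{prop:cauchy}: any $u_0\in C^1_c(\Omega)$ vanishes near $\partial\Omega$ and, after extension by $0$, has the regularity of Definition~\ref{def:sol-log}; the barriers below confine the solution (in particular they rule out finite-time blow up, the double-exponential regime being reached only as $t\to\infty$), and uniqueness follows from the same one-sided log-Lipschitz control of the nonlinearity $f(u)=u\ln(u^2)$ that underlies Proposition~\ref{prop:cauchy}. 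The same control yields the comparison principle used throughout: if $\underline u$ (resp.\ $\bar u$) is a sub- (resp.\ super-) solution of \eqref{eq:Omega} with $\underline u\le u_0\le \bar u$ at $t=0$ and $\underline u\le 0\le\bar u$ on $\partial\Omega$, then $\underline u\le u\le \bar u$. The degeneracy $f'(u)=2+\ln(u^2)\to-\infty$ as $u\to 0^+$ is dissipative and hence harmless, while away from $0$ the nonlinearity is locally Lipschitz.

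For the decay statement I would use the spatially constant supersolution provided by \eqref{sol-ode}. Choosing $u_0\in C^1_c(\Omega)$, $u_0\ge0$, with $n_0:=\|u_0\|_{L^\infty}<1$, the function $n(t)=e^{(\ln n_0)e^{2t}}$ solves \eqref{ode} and, being $x$-independent, solves \eqref{eq:Omega} in the interior; moreover $n(t)>0=u$ on $\partial\Omega$ and $n(0)=n_0\ge u_0$. Comparison then gives $0\le u(t,x)\le n(t)$, so that $\|u(t)\|_{L^\infty(\Omega)}\le e^{-\eta e^{2t}}$ with $\eta=\ln(1/n_0)>0$, and the $L^2$ bound follows from $\|u(t)\|_{L^2(\Omega)}\le|\Omega|^{1/2}\|u(t)\|_{L^\infty(\Omega)}$.

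The growth statement is the delicate one, since the Dirichlet condition forces $u=0$ on $\partial\Omega$ and thus favours decay; the mechanism must come from a large interior datum. I would look for a separated subsolution $\underline u(t,x)=g(t)\psi(x)$, with $g$ solving \eqref{ode} and $g(0)=g_0>1$, so that $g(t)=e^{(\ln g_0)e^{2t}}$ grows like a double exponential. Using $g'=2g\ln g$, a direct computation shows that $\underline u$ is a subsolution exactly when the profile satisfies
\begin{equation*}
\psi''+2\psi\ln\psi\ge 0 \quad\text{in }\Omega,\qquad \psi\ge0,\ \psi\in C^1_c(\Omega).
\end{equation*}
This is suggested by the steady state: since $\varphi''+2\varphi\ln\varphi=0$, every dilation $m\varphi$ with $m>1$ satisfies $(m\varphi)''+2(m\varphi)\ln(m\varphi)=2m\ln m\,\varphi>0$, i.e.\ is a strict subsolution profile. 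Given such a $\psi$, I would take any $u_0\in C^1_c(\Omega)$ with $u_0\ge g_0\psi$; comparison yields $u(t,x)\ge g(t)\psi(x)$, whence $\|u(t)\|_{L^2(\Omega)}\ge \|\psi\|_{L^2}\,e^{\eta e^{2t}}$ with $\eta=\ln g_0>0$, and $|\Omega|^{1/2}\|u(t)\|_{L^\infty(\Omega)}\ge\|u(t)\|_{L^2(\Omega)}$.

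The \textbf{main obstacle} is the construction of the profile $\psi$: because $\ln\psi\to-\infty$ where $\psi\to0$, neither a dilated Gaussian (which does not vanish on $\partial\Omega$) nor a concave bump such as a Dirichlet eigenfunction (for which $\psi''+2\psi\ln\psi=\psi(2\ln\psi-\mu_1)<0$ near $\partial\Omega$) can be used. I would instead glue a tall interior part, where $\psi$ exceeds the threshold $e^{\mu_1/2}$ with $\mu_1=(\pi/|\Omega|)^2$ so that $2\ln\psi-\mu_1\ge0$, to convex boundary layers in which $\psi$ vanishes quadratically, $\psi\sim c\,s^2$ with $s$ the distance to the edge of its support. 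There the harmful term is negligible, $2\psi\ln\psi=o(1)$, while $\psi''\to2c>0$ dominates, so the inequality holds and $\psi$ is of class $C^1$ with piecewise continuous second derivative — precisely the regularity allowed by Definition~\ref{def:sol-log}. The remaining work is to verify the inequality in the transition region, which amounts to controlling the curvature of $\psi$ there ($|\psi''|\le 2\psi\ln\psi$ where $\psi>1$, and $\psi''\ge 2\psi\ln(1/\psi)$ where $\psi<1$, the latter right-hand side being bounded by $2/e$); this can always be arranged by taking the interior amplitude large, so that the construction succeeds for every $\Omega$.
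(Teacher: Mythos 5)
Your strategy coincides with the paper's: the decay part is exactly the paper's argument (comparison with the spatially constant ODE supersolution $e^{(\ln n_0)e^{2t}}$ for $\|u_0\|_{L^\infty}<1$), and the growth part is exactly the mechanism of Lemma~\ref{lem:plateaux} and Theorem~\ref{th:threshold}: a separated subsolution $g(t)\psi(x)$ with $g$ solving the ODE and a tall compactly supported profile satisfying $\psi''+2\lambda\psi\ln\psi\ge 0$, obtained by amplifying a fixed bump so that the $\ln K$ term dominates in the interior while the convexity of $\psi$ near the edge of its support beats the vanishing term $2\psi\ln\psi$. The one substantive difference is at the edge of the support: you make $\psi$ vanish quadratically, so $\psi''$ jumps from $2c>0$ to $0$ there, whereas Theorem~\ref{theo:comparaison} (as stated, and as used in the paper) requires sub- and supersolutions whose second derivative is \emph{continuous} on $(0,T]\times\R$ --- the regularity class ${\rm BPC}^2$ with piecewise continuous second derivative applies to initial data, not to the comparison functions. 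The paper's Lemma~\ref{lem:plateaux} imposes $\Theta''(\pm L)=0$ with cubic vanishing $\Theta\sim\frac16\gamma(x+L)^3$ precisely to keep the subsolution $C^2$; with cubic vanishing one has $\Theta''\sim\gamma(x+L)$ while $2\lambda\Theta\ln\Theta=o(x+L)$, so the differential inequality still holds near the edge. Your construction is easily repaired (smooth the profile, or invoke the extension of the comparison principle to subsolutions with a second derivative jumping upward, which is the favorable sign), but as written it does not satisfy the hypotheses of the comparison theorem you are relying on.
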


\subsection{Changing the sign of the nonlinearity} 
Let us observe that, for $\lambda >0$, the problem 
\begin{equation}
\label{eq-moins}
\partial _t u=\partial _{xx} u -2\lambda\, u \ln u, \quad t>0,\; x\in \R,
\end{equation}
is of different nature. Indeed for the underlying ODE,
$$
n'(t)=-2\lambda n(t)\ln n(t),
$$
the equilibrium 0 is (very) unstable,
while 1 is stable. Hence, by the comparison principle, solutions are
{\it a priori} bounded between 0 and $\max (1,\Vert u_0\Vert
_{L^\infty})$. Moreover, by comparison with Fisher-KPP equations, much
can be said on the long time behavior of the Cauchy problem. For
instance, consider a nontrivial compactly supported initial data
$0\leq u_0\leq 1$. For any $r>0$, we can construct  
$$
g_r:[0,1]\to \R \text{ concave with } g_r>0 \text{ on } (0,1), \; g_r(0)=g_r(1), \; r=g'_r(0)>0>g'_r(1),
$$
which is referred as to a Fisher-KPP nonlinearity, and such that
$g_r(u)\leq -2\lambda u \ln u$. By the comparison principle, we deduce
that $u_r(t,x)\leq u(t,x)\leq 1$, where $u_r$ is the solution of 
$$
\partial _t u_r=\partial _{xx} u_r +g_r(u_r), 
$$
starting from $u_0$. But it is known \cite{Aro-Wei-78} that the
spreading speed of this Fisher-KPP equation, with compactly supported
data, is $c_r^*:=2\sqrt{g_r'(0)}=2\sqrt r$, meaning that 
$$
\text{ if } c>c_r^* \text{ then } u_r(t,x)\to 0 \text{ uniformly in } \{\vert x\vert \geq ct\} \text{ as } t\to \infty,
$$ 
$$
\text{ if } c<c_r^* \text{ then } u_r(t,x)\to 1 \text{ uniformly in } \{\vert x\vert \leq ct\} \text{ as } t\to \infty.
$$ 
Since this is true for any $r>0$ we get that
$$
\text{ for any } c>0,\, u(t,x)\to 1 \text{ uniformly in } \{\vert x\vert \leq ct\} \text{ as } t\to \infty,
$$
that is convergence to 1 with a superlinear speed.

\medskip

The organization of the paper is as follows. In Section
\ref{s:steady}, we enquire on steady states, proving existence of a
unique (Gaussian) nontrivial one.  The well-posedness of the Cauchy
problem is established in Section~\ref{s:cauchy}. The long time behavior
(superexponential growth, decay or convergence to the steady state) is
studied in Section~\ref{s:gauss} (Gaussian initial data and
consequences), and Section~\ref{s:data} (more general data and
consequences). 
 
\section{Steady states}\label{s:steady}

It is readily checked that the only constant steady states of
\eqref{eq} are $u\equiv 0$ and $u\equiv 1$. 

\begin{prop}[Steady state]\label{prop:steady} There is a unique (up to
  translation) nonnegative nontrivial steady state $\varphi$
  solving \eqref{eq} and satisfying $\varphi(\pm \infty)=0$. It is the
  Gaussian given by 
\begin{equation}
\label{def-varphi}
\varphi(x)=e^{\frac 1 2}e^{-\frac \lambda 2 x^2}.
\end{equation}
\end{prop}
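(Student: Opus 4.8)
The plan is to reduce the problem to a second-order autonomous ODE and to exploit a first-integral (energy) argument. Any steady state $\varphi$ solves $\varphi'' + \lambda\varphi\ln(\varphi^2)=0$, that is
\begin{equation*}
\varphi'' + 2\lambda\,\varphi\ln\varphi = 0 .
\end{equation*}
Existence then reduces to a direct verification: for the Gaussian \eqref{def-varphi} one has $\ln\varphi = \tfrac12 - \tfrac\lambda2 x^2$, $\varphi' = -\lambda x\,\varphi$ and $\varphi'' = \lambda(\lambda x^2 - 1)\varphi = -2\lambda\varphi\ln\varphi$, so \eqref{def-varphi} is indeed a steady state. The whole difficulty therefore lies in uniqueness.

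For uniqueness, let $\varphi\ge 0$ be any nontrivial steady state with $\varphi(\pm\infty)=0$. First I would multiply the ODE by $\varphi'$ and integrate to obtain the conserved quantity
\begin{equation*}
E := \tfrac12(\varphi')^2 + \lambda\varphi^2\ln\varphi - \tfrac\lambda2\varphi^2 ,
\end{equation*}
indeed $E' = \varphi'(\varphi'' + 2\lambda\varphi\ln\varphi)=0$, so $E$ is constant. To identify this constant I would let $x\to+\infty$: since $\varphi\to 0$ we have $\varphi^2\ln\varphi\to0$, hence $(\varphi')^2\to 2E$; if $E>0$ then $\varphi'$ would keep a constant sign with $|\varphi'|$ bounded below for large $x$, forcing $|\varphi|\to\infty$ and contradicting $\varphi\to0$. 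Thus $E=0$, which yields the first-order relation
\begin{equation*}
(\varphi')^2 = \lambda\,\varphi^2\bigl(1 - \ln(\varphi^2)\bigr).
\end{equation*}

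From this relation I would read off the profile. Its right-hand side is nonnegative only for $0\le\varphi\le e^{1/2}$, and $\varphi'=0$ at a point where $\varphi>0$ forces $\varphi=e^{1/2}$. Being nontrivial, nonnegative and vanishing at $\pm\infty$, $\varphi$ attains a positive maximum, necessarily equal to $e^{1/2}$; after a translation I may assume $\varphi(0)=e^{1/2}$ and $\varphi'(0)=0$, which is precisely the Cauchy data satisfied by the Gaussian \eqref{def-varphi}. It then remains to invoke uniqueness for this Cauchy problem, and here lies the main obstacle: the nonlinearity $s\mapsto s\ln s$ is continuous but not Lipschitz at $s=0$, so the Cauchy--Lipschitz theorem does not apply near the ends of the profile. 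I would resolve this in two steps. On the one hand, separating variables in the first-order relation, the integral $\int \frac{d\varphi}{\varphi\sqrt{1-\ln(\varphi^2)}}$ diverges as $\varphi\to0$, so the profile cannot reach the value $0$ at any finite $x$ and decays to $0$ only as $x\to\pm\infty$; in particular it stays strictly positive on $\R$. On the other hand, on every compact interval $[-R,R]$ the solution is bounded below by a positive constant, a range on which $s\mapsto s\ln s$ is $C^1$ hence Lipschitz, so any two solutions of the Cauchy problem coincide on $[-R,R]$ and, letting $R\to\infty$, on all of $\R$. Since the explicit Gaussian \eqref{def-varphi} solves the same Cauchy problem, $\varphi$ must coincide with it up to translation.
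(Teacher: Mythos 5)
Your proof is correct and follows essentially the same route as the paper: multiply by $\varphi'$ to get the first integral, use the decay at infinity to pin the constant to zero, deduce that the positive maximum must equal $e^{1/2}$, and identify the profile from the resulting first-order relation. The only (minor) variations are at the endpoints of the argument: the paper obtains strict positivity up front from the strong maximum principle and finishes by explicitly computing $\int \frac{du}{u\sqrt{1-2\ln u}}=-\sqrt{1-2\ln u}$, whereas you obtain positivity from the divergence of that same integral at $0$ (which tacitly uses the monotonicity of $\varphi$ before a putative first zero, a fact that follows, as in the paper, from the first-order relation pinning interior critical values at $e^{1/2}$) and finish by Cauchy--Lipschitz uniqueness on the region where the nonlinearity is locally Lipschitz.
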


\begin{proof} Let $u=u(x)\geq 0$ be a nontrivial solution to \eqref{eq}, that is 
\begin{equation}
\label{eq-steady}
u''(x)+2\lambda u(x)\ln u(x)=0, \quad \forall x \in \R,
\end{equation}
 with $u(\pm \infty)=0$. If $u(x_0)=0$ for some $x_0\in\R$ then $u\equiv 0$ from the strong maximum principle. Hence $u>0$. Next, we multiply the equation by $u'$, integrate and infer that there is $C\in \R$ such that
$$
\({u'}\)^2(x)+ \lambda u^2(x)(2\ln u(x)-1)=C, \quad \forall x\in \R.
$$
{}From the above identity and since $u(\pm\infty)=0$,  we deduce that
$u'(\pm \infty)$ must exist in $\R$ and, thus, be equal to 0
(otherwise we cannot have $u(\pm \infty)=0$). Hence $C=0$ and 
\begin{equation}
\label{eq-steady-int}
\({u'}\)^2(x)=\lambda u^2(x)(1-2\ln u(x)), \quad \forall x\in \R.
\end{equation}
If $x\mapsto 1-2\ln u(x)$ never vanishes then this identity implies
that $u'$ has a constant sign, which contradicts $u(\pm
\infty)=0$. Hence, there exists $x_0\in \R$ such that $2\ln
u(x_0)=1$, and thus $u'(x_0)=0$ (from \eqref{eq-steady-int}),
$u''(x_0)<0$ (from \eqref{eq-steady}). In the sequel, we work
on $[x_0,+\infty)$, the arguments being similar on
$(-\infty,x_0]$. 
\smallbreak

Assume that there is $x_1> x_0$ such that
$u'(x_1)=0$. From \eqref{eq-steady-int}, $2\ln u(x_1)=1$, and there
must be a point $x^{*}\in(x_0,x_1)$ where $u$ reaches a minimum
strictly smaller than $e^{\frac 1 2}$, which contradicts
\eqref{eq-steady-int}. Hence $u'<0$ on $(x_0,+\infty)$. It therefore
follows from \eqref{eq-steady-int} that $-u'(x)=\sqrt \lambda
u(x)\sqrt{1-2\ln u(x)}$ for $x\geq x_0$. Separating variables we get
$$
-\sqrt \lambda (x-x_0)=\int _{u(x_0)}^{u(x)}\frac{du}{u\sqrt{1-2\ln u}}=-\sqrt{1-2\ln u(x)},
$$
since $u(x_0)=e^{\frac 12}$. We end up with $u(x)=e^{\frac
  12}e^{-\frac \lambda 2 (x-x_0)^{2}}$, which completes the proof.
\end{proof}

\section{Cauchy problem}
\label{s:cauchy}

As emphasized in the introduction, the Cauchy problem associated to
\eqref{eq} is not trivial, for two reasons:
\begin{itemize}
\item Local well-posedness: the nonlinearity is not Lipschitzean.
\item Global well-posedness: the potential energy in
  \eqref{def-energie} has no definite sign.  
\end{itemize}
The first aspect implies that constructing a solution certainly requires
compactness arguments, and uniqueness is not granted. The second
aspect shows that to have a solution defined for all $t\ge 0$, it may
be helpful that the first step yields this property ``for free''. This
is the strategy adopted in \cite{ChLuLi15}, where, on a bounded domain
$\Omega$, with Dirichlet boundary conditions, the authors construct a
solution in $H^1_0(\Omega)$ by Galerkin approximation. However,
uniqueness is not established in this case.
\medskip

In this section, we prove Proposition~\ref{prop:cauchy},
by showing that it fits perfectly into the framework of the PhD thesis
of J.~C.~Meyer \cite{Me13}. 
Instead of working in  spaces where the energy  \eqref{def-energie}
is well defined, we adopt the approach of \cite{Me13}, see also \cite{MeNe15}. Consider more generally
the Cauchy problem 
\begin{equation}
  \label{eq:meyer}
  u_t=u_{xx}+f(u),\quad 0<t\leq T,\; x\in \R,\quad u_{\mid t=0}=u_0,
\end{equation}
so we can emphasize which are the suitable assumptions of the
nonlinearity $f$ described in \cite{Me13}. Notice that, in \cite{Me13,MeNe15}, the standard examples, motivated by models
  from Chemistry, are of the form  $f(u) = \pm(u^p)^+$, $0<p<1$, and
  $f(u)=(u^p)^+ ((1-u)^q)^+$, $0<p,q<1$. 
  
The generalization of Definition~\ref{def:sol-log}, as introduced in
\cite{Me13}, is the following.

\begin{defi}[Notion of solution]\label{def:sol}
  Let $u_0$ be  continuous and bounded, with continuous and bounded
  derivative, and bounded and piecewise continuous second
  derivative. A solution to \eqref{eq:meyer}  is a
  function $u:[0,T]\times \R\to \R$ which is continuous and bounded on
  $[0,T]\times \R$,  for which $u_t$, $u_x$ and $u_{xx}$ exist and
  are continuous on $(0,T]\times \R$, such that $u(t,x)$ satisfies
  \eqref{eq}. In addition, we require that 
  $u(t,x)$ is uniformly bounded as $|x|\to \infty$ for $t\in [0,T]$. 
\end{defi}
\begin{nota}
  Following \cite{Me13,MeNe15}, we denote by ${\rm BPC}^2(\R)$ the set
  of such  initial data. 
\end{nota}
Two notions are crucial, and correspond exactly to the type of logarithmic
nonlinearity considered in the present paper. 

\begin{defi}[H\"older continuity]
  Let $\alpha\in (0,1)$. A function $f:\R\to \R$ is said to be $\alpha$-\emph{H\"older continuous} if for any closed bounded interval
  $E\subset \R$, there exists a constant $k_E>0$ such that for all
  $x,y\in E$, 
  \begin{equation*}
    |f(x)-f(y)|\le k_E|x-y|^\alpha.
  \end{equation*}
\end{defi}

A notion weaker than the standard notion of Lipschitz continuity turns
out to be rather interesting, as we will see below.

\begin{defi}[Upper Lipschitz continuity]
  A function $f:\R\to \R$ is said to be \emph{upper Lipschitz
    continuous} if $f$ is continuous, and for any closed bounded
  interval $E\subset \R$, there exists a constant $k_E>0$ such that
  for all $x,y\in E$, with $y\ge x$,
  \begin{equation*}
    f(y)-f(x)\le k_E (y-x). 
  \end{equation*}
\end{defi}

Essentially, this property suffices to have a comparison principle,
hence a uniqueness result for \eqref{eq:meyer}. 

\begin{example}
  In the  case of \eqref{eq}, $f(u) = \lambda u \ln(u^2)$. First, $f$ is
  $\alpha$-H\"older continuous for any $\alpha\in (0,1)$. Indeed, for $y>x>0$, we have
  $$
  \vert f(y)-f(x)\vert=2\lambda \left\vert (y-x)\ln y +x\ln\left(1+\frac{y-x}{x}\right)\right\vert\leq 2\lambda \vert y-x\vert (\vert \ln y\vert +1),
  $$
  so that $\frac{\vert f(y)-f(x)\vert}{\vert y-x\vert^{\alpha}}\leq 2\lambda \vert y-x\vert ^{1-\alpha} (\vert \ln y\vert +1)\leq 2\lambda \vert y\vert ^{1-\alpha}(\vert \ln y\vert +1)$, which remains bounded as $y\to 0$. On the other
  hand, even though $f$ is not Lipschitz continuous, we check that for
  $\lambda>0$ (the case of interest in the present paper), $f$ is
  \emph{upper} Lipschitz continuous. Indeed, for $x,y\in E$ bounded,
  with $y>x>0$, Taylor formula yields
  \begin{align*}
    f(y)-f(x)
   & =(y-x) \int_0^1f'\(x+\theta(y-x)\)d\theta\\
&=2\lambda(y-x)\int_0^1(1+\ln)
\(x+\theta(y-x)\)d\theta\\
&\le  2\lambda(y-x) 2\lambda \(1+\sup_{z\in E}\ln z\).
  \end{align*}
The last factor remains bounded as $x\to 0$. It would not be if the infimum was
considered: $f$ is not Lipschitz continuous. 
\end{example}

\begin{defi}[Sub- and super-solutions]
  Let $\underline u, \overline u:[0,T]\times \R$ be continuous on
  $[0,T]\times \R$ and such that $\underline u_t,\underline
  u_x,\underline u_{xx}, \overline u_t,\overline u_x,\overline u_{xx}$
  exist and are continuous on $(0,T]\times\R$. If
  \begin{align*}
    &\underline u_t-\underline u_{xx}-f\(\underline u\)\le 0\le
    \overline u_t-\overline u_{xx}-f\(\overline u\),\quad 0<t\leq T, \; x\in \R,\\
& \underline u(0,x)\le u_0(x)\le \overline u(0,x),\quad \forall x\in \R,
  \end{align*}
and $\underline u, \overline u$ are uniformly bounded as $|x|\to
\infty$ for $t\in [0,T]$, then $\underline u$ is called a
\emph{regular sub-solution}, and $\overline u$ is called a
\emph{regular super-solution} to \eqref{eq:meyer}. 
\end{defi}
\begin{theo}[Comparison; Theorem~7.1 from
  \cite{Me13,MeNe15}]\label{theo:comparaison}
  Let $f$ be upper Lipschitz continuous. If $\underline u$ and
  $\overline u$ and regular sub and super-solutions on $[0,T]\times\R$, respectively, 
  then 
  \begin{equation*}
    \underline u(t,x)\le \overline u(t,x),\quad \forall (t,x)\in
    [0,T]\times\R. 
  \end{equation*}
\end{theo}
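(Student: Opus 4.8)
The plan is to run a maximum-principle argument on the difference $w := \underline u - \overline u$, using upper Lipschitz continuity precisely in the region where $w>0$ and taming the unbounded spatial domain with a barrier that grows at infinity. First I would subtract the two defining differential inequalities of the regular sub- and super-solutions of \eqref{eq:meyer} to get
\[
w_t - w_{xx} \le f(\underline u) - f(\overline u), \qquad 0<t\le T,\ x\in\R,
\]
together with $w(0,\cdot)\le 0$ and the uniform boundedness of $w$ as $|x|\to\infty$. Since $\underline u$ and $\overline u$ are bounded on $[0,T]\times\R$, they take values in a fixed closed bounded interval $E$; let $k_E$ be the associated upper Lipschitz constant. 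The crucial observation is that at any point where $w(t,x)>0$, i.e. $\underline u(t,x)>\overline u(t,x)$, upper Lipschitz continuity applies with $y=\underline u(t,x)$ and $x=\overline u(t,x)$ and yields $f(\underline u)-f(\overline u)\le k_E\,w$. Hence
\[
w_t - w_{xx} \le k_E\,w \qquad \text{on the open set } \{w>0\}.
\]
This is exactly the virtue of the one-sided hypothesis: the linear bound is only needed in the single direction that matters, even though $f$ fails to be Lipschitz.

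Next I would introduce, for $\ep>0$, the barrier $\phi(t,x):=e^{\mu t}(1+x^2)$ and the shifted difference $W:=w-\ep\,\phi$. A direct computation gives $\phi_t-\phi_{xx}-k_E\phi = e^{\mu t}\bigl[(\mu-k_E)(1+x^2)-2\bigr]$, so choosing $\mu>k_E+2$ makes $\phi$ a \emph{strict} supersolution of the linear operator, with $\phi_t-\phi_{xx}-k_E\phi\ge\delta_0>0$ for some constant $\delta_0$. Because $w$ is bounded while $\phi(t,x)\to+\infty$ as $|x|\to\infty$ uniformly for $t\in[0,T]$, and $W(0,\cdot)\le-\ep<0$, the function $W$ is strictly negative at $t=0$ and for large $|x|$; in particular, for each fixed $t$ the quantity $\sup_x W(t,\cdot)$ is attained on a compact $x$-set and depends continuously on $t$.

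The heart of the argument is a \emph{first crossing time} contradiction. Suppose $W>0$ somewhere. Then there is a first time $t^*\in(0,T]$ at which $\sup_x W(t^*,\cdot)=0$, attained at a finite $x^*$, with $W\le 0$ for all $t<t^*$. At $(t^*,x^*)$ one has $W_x=0$ and $W_{xx}\le 0$ (spatial maximum), while $W(t,x^*)\le 0=W(t^*,x^*)$ for $t<t^*$ forces $W_t(t^*,x^*)\ge 0$ by continuity of $W_t$; hence $W_t-W_{xx}\ge 0$ there. On the other hand, $W(t^*,x^*)=0$ means $w(t^*,x^*)=\ep\phi>0$, so the inequality on $\{w>0\}$ is available, and combining it with the strict supersolution property gives, at $(t^*,x^*)$,
\[
W_t-W_{xx}\le k_E w-\ep(\phi_t-\phi_{xx})=k_E W-\ep(\phi_t-\phi_{xx}-k_E\phi)\le k_E W-\ep\,\delta_0=-\ep\,\delta_0<0,
\]
where the term $k_E W$ vanishes precisely because $W=0$ at the crossing point. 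This contradicts $W_t-W_{xx}\ge 0$, so $W\le 0$ on $[0,T]\times\R$, that is $w\le\ep\,e^{\mu t}(1+x^2)$; letting $\ep\to 0$ yields $w\le 0$, i.e. $\underline u\le\overline u$.

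I expect the main obstacle to be the interplay between the two error terms at the crossing point: the linear contribution $k_E w$ produced by upper Lipschitz continuity cannot be absorbed by a spatial maximum argument alone, and it is essential to evaluate at the \emph{first} time $t^*$ at which $W$ just touches $0$, so that $k_E W=0$ and only the strictly negative barrier term $-\ep\delta_0$ survives. A secondary technical point, which I would treat carefully but which is routine, is justifying that the spatial supremum is attained and varies continuously in $t$; this follows from the uniform decay $W\to-\infty$ as $|x|\to\infty$ on $[0,T]$ together with the continuity of $W$.
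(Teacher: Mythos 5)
Your proof is correct. Note that the paper itself gives no proof of this statement: it is imported verbatim from the cited references \cite{Me13,MeNe15}, so there is no internal argument to compare against. What you have written is essentially the standard Phragm\'en--Lindel\"of-type comparison argument used in those references: the two key ingredients --- (a) invoking the one-sided Lipschitz bound only at points where $\underline u>\overline u$, and (b) the polynomially growing barrier $\ep e^{\mu t}(1+x^2)$ with $\mu>k_E+2$ to force the first-crossing point to occur at a finite $x^*$ despite the unbounded domain --- are exactly what makes the comparison principle work for merely upper Lipschitz $f$ on all of $\R$. The technical points you flag (attainment and continuity in $t$ of $\sup_x W(t,\cdot)$, the one-sided time derivative at the first crossing time, and the fact that $t^*>0$ so that the classical derivatives of $W$ exist there) are all handled correctly by the hypotheses of Definition~\ref{def:sol} (boundedness of $\underline u,\overline u$ uniformly as $|x|\to\infty$, continuity of the derivatives on $(0,T]\times\R$). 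The argument is complete as stated.
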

\begin{theo}[Uniqueness; Theorem~7.2 from
  \cite{Me13,MeNe15}]\label{theo:unicite}  
  Let $f$ be upper Lipschitz continuous. Then, for any $T>0$, \eqref{eq:meyer} has at
  most one solution in $[0,T]\times\R$. 
\end{theo}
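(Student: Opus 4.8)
The plan is to prove uniqueness as a direct corollary of the comparison principle (Theorem~\ref{theo:comparaison}). Suppose $u$ and $v$ are both solutions to \eqref{eq:meyer} on $[0,T]\times\R$ in the sense of Definition~\ref{def:sol}, with the same initial data $u_0$. The key observation is that a genuine solution is simultaneously a regular sub-solution and a regular super-solution: indeed, $u_t-u_{xx}-f(u)=0$ yields both $u_t-u_{xx}-f(u)\le 0$ and $u_t-u_{xx}-f(u)\ge 0$, and the regularity and boundedness requirements in the definition of sub/super-solutions are exactly those imposed on solutions in Definition~\ref{def:sol}.

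First I would fix $u$ as a regular super-solution and $v$ as a regular sub-solution; since $u(0,x)=u_0(x)=v(0,x)$, the initial ordering $v(0,\cdot)\le u(0,\cdot)$ holds trivially (with equality). Applying Theorem~\ref{theo:comparaison} with $\underline u=v$ and $\overline u=u$ gives $v(t,x)\le u(t,x)$ for all $(t,x)\in[0,T]\times\R$. Then I would reverse the roles, treating $u$ as the sub-solution and $v$ as the super-solution, which is equally legitimate, and obtain the opposite inequality $u(t,x)\le v(t,x)$. Combining the two inequalities yields $u\equiv v$ on $[0,T]\times\R$, establishing that \eqref{eq:meyer} has at most one solution.

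The only point requiring care is that the comparison theorem is stated for upper Lipschitz continuous $f$, so I must invoke that hypothesis explicitly, which is granted in the statement. There is essentially no analytic obstacle here: the entire content has been front-loaded into Theorem~\ref{theo:comparaison}, and uniqueness is a formal consequence obtained by applying comparison twice with the roles of the two solutions interchanged. The main thing to verify is the bookkeeping, namely that each solution satisfies both the differential inequality (with equality) and the boundary/growth conditions needed to qualify as a regular sub- and super-solution, so that the comparison principle applies in both directions. Since Definition~\ref{def:sol} requires uniform boundedness as $|x|\to\infty$ and continuity of $u_t,u_x,u_{xx}$ on $(0,T]\times\R$, these qualifications are immediate.
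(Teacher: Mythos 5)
Your argument is correct and is exactly the route the paper intends: Theorem~\ref{theo:unicite} is quoted from \cite{Me13,MeNe15}, and the text preceding it (``this property suffices to have a comparison principle, hence a uniqueness result'') signals that uniqueness is obtained precisely by noting that a solution is simultaneously a regular sub- and super-solution and applying Theorem~\ref{theo:comparaison} twice with the roles reversed. Your bookkeeping check that Definition~\ref{def:sol} supplies all the regularity, boundedness, and initial-ordering requirements of the sub/super-solution definition is the only point of substance, and you handle it correctly.
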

The following statement is a slight modification from the original,
where we add a uniqueness assumption to simplify the presentation. 
\begin{theo}[Existence; Theorem~8.1 and Corollary~8.6 from
  \cite{Me13,MeNe15}]\label{theo:existence}   
  Suppose that $f$ is $\alpha$-H\"older continuous for some $\alpha\in
  (0,1)$, and let $u_0\in {\rm BPC}^2(\R)$. Suppose that uniqueness
  holds for
  \eqref{eq:meyer}. Then \eqref{eq:meyer} has a (unique) solution
  $u:[0,T^*[\times \R$. In addition, either $T^*=\infty$, or
  $\|u(t,\cdot)\|_{L^\infty(\R)}$ is unbounded as $t\to T^*$. 
\end{theo}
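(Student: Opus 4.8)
The plan is to solve the mild (integral) formulation of \eqref{eq:meyer} by a compactness argument and then to upgrade the resulting mild solution to a classical one, the $\alpha$-Hölder continuity of $f$ being exactly what powers this last step. Write $G(t,x)=(4\pi t)^{-1/2}e^{-x^2/(4t)}$ for the heat kernel and seek $u$ as a fixed point of
$$
\Phi(u)(t,x)=\int_\R G(t,x-y)u_0(y)\,dy+\int_0^t\!\!\int_\R G(t-s,x-y)f\bigl(u(s,y)\bigr)\,dy\,ds .
$$
Since $f$ is merely $\alpha$-Hölder, the Banach contraction principle is unavailable; I would instead invoke Schauder's fixed point theorem on the space $X_\tau:=C\bigl([0,\tau];C_b(\R)\bigr)$ with the sup norm. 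Set $M:=2\|u_0\|_{L^\infty(\R)}$ and $C_M:=\sup_{|z|\le M}|f(z)|<\infty$, finite by continuity of $f$. The mass-one, sign-preserving kernel gives $\|\Phi(u)(t)\|_{L^\infty}\le\|u_0\|_{L^\infty}+C_M t$, so choosing $\tau\le\|u_0\|_{L^\infty}/C_M$ makes $\Phi$ map the closed ball $B_M\subset X_\tau$ into itself; continuity of $\Phi$ on $B_M$ follows from uniform continuity of $f$ on $[-M,M]$ and dominated convergence.

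The crux is compactness of $\Phi(B_M)$, which the smoothing of the semigroup supplies. Using $\|\partial_x G(t-s)\|_{L^1}\lesssim(t-s)^{-1/2}$ and $\|\partial_t G(t-s)\|_{L^1}\lesssim(t-s)^{-1}$, I would establish equicontinuity of $\Phi(u)$ in both $x$ and $t$, uniformly over $u\in B_M$. On the unbounded line Arzelà–Ascoli does not apply directly, so the delicate point — and the step I expect to be the main obstacle — is to supplement uniform equicontinuity with control at spatial infinity: I would exhaust $\R$ by compact intervals, extract a locally uniformly convergent subsequence by a diagonal argument, and separately transmit to $\Phi(u)$, through the kernel, the uniform behaviour of $u_0\in{\rm BPC}^2(\R)$ as $|x|\to\infty$, thereby ruling out escape of mass and obtaining relative compactness in $X_\tau$. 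Schauder's theorem then yields a mild solution $u\in B_M$.

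To promote $u$ to a solution in the sense of Definition~\ref{def:sol}, note that $t\mapsto u(t)$ is continuous and $f$ is $\alpha$-Hölder, so $(s,y)\mapsto f(u(s,y))$ is locally Hölder continuous on $(0,\tau]\times\R$; the interior parabolic Schauder estimates for the inhomogeneous heat equation then guarantee that $u_t,u_x,u_{xx}$ exist and are continuous on $(0,\tau]\times\R$ and that $u$ satisfies \eqref{eq:meyer} pointwise there. Strong continuity of the heat semigroup on $C_b(\R)$ gives $u(t)\to u_0$ uniformly as $t\to0$, and the kernel representation passes the uniform bound of $u_0$ at infinity to $u(t)$, so $u$ belongs to the required class. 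Under the standing uniqueness hypothesis this $u$ is the unique solution on $[0,\tau]$.

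Finally, the local time $\tau$ above depends on $u_0$ only through $\|u_0\|_{L^\infty}$. Defining $T^*$ as the supremum of existence times and using uniqueness to glue local solutions, I obtain a single maximal solution on $[0,T^*)$. If $T^*<\infty$ and yet $\sup_{0\le t<T^*}\|u(t)\|_{L^\infty}=:K<\infty$, then restarting the construction from datum $u(t_0)$ at any $t_0<T^*$ produces a solution on $[t_0,t_0+\tau_0]$ with $\tau_0$ bounded below solely in terms of $K$; for $t_0$ close enough to $T^*$ this extends the solution past $T^*$, contradicting maximality. Hence either $T^*=\infty$ or $\|u(t,\cdot)\|_{L^\infty(\R)}$ is unbounded as $t\to T^*$, which is the asserted dichotomy.
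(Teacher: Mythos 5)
First, note that the paper itself does not prove this statement: it is quoted verbatim from Meyer's thesis \cite{Me13} and \cite{MeNe15}, so your attempt can only be judged on its own merits, and there it has a genuine gap at the step you yourself flag as the crux. Schauder's fixed point theorem in $X_\tau=C([0,\tau];C_b(\R))$ with the sup norm requires $\Phi(B_M)$ to be relatively compact \emph{in that norm}, and this is false. The linear part $e^{t\d_{xx}}u_0$ is harmless (a single fixed function), but the Duhamel term ranges over sources $f(u)$ with $u\in B_M$ arbitrary, and translations destroy norm-compactness: take $u_n(s,y)=\chi(y-n)$ for a fixed bump $\chi$ with $f\circ\chi$ not identically $f(0)$; after subtracting the common contribution $tf(0)$, the images $\Phi(u_n)$ are translates of one fixed nonconstant profile and admit no subsequence converging uniformly on $\R$. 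Your proposed remedy --- diagonal extraction on compact exhaustions plus ``transmitting the uniform behaviour of $u_0$ at infinity through the kernel'' --- cannot close this: the diagonal argument only yields compactness for the topology of locally uniform convergence, and the tails of $\Phi(u)$ are governed by $f(u)$, not by $u_0$ (and members of ${\rm BPC}^2(\R)$ need not have any limit at infinity, only boundedness, so there is nothing to transmit). As written, the claim of relative compactness in $X_\tau$, hence the invocation of Schauder's theorem, fails.

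The architecture is salvageable with a known modification: run the fixed point in the Fr\'echet space $C([0,\tau]\times\R)$ equipped with locally uniform convergence and apply the Schauder--Tychonoff theorem on the closed convex set $\{\|u\|_{L^\infty}\le M\}$, where your uniform bound together with the kernel-derivative equicontinuity estimates \emph{does} give compactness, and the fixed point retains the uniform bound required by Definition~\ref{def:sol}; alternatively (closer to what Meyer and Needham actually do) approximate $f$ by Lipschitz nonlinearities, solve each regularized problem by contraction, and pass to a locally uniformly convergent subsequence. A second, smaller slip occurs in the regularity upgrade: continuity of $u$ does not make $(s,y)\mapsto f(u(s,y))$ locally H\"older --- composing an $\alpha$-H\"older $f$ with a merely continuous $u$ gives only continuity. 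You must first show that the mild solution $u$ is itself parabolically H\"older on $(0,\tau]\times\R$ (which follows from the representation formula with bounded source and the smoothing of the kernel), whence $f(u)$ is locally H\"older of exponent $\alpha\beta$, and only then do the interior parabolic Schauder estimates apply; this bootstrap is standard but cannot be skipped. Your continuation argument --- gluing via the assumed uniqueness, with local existence time depending only on the $L^\infty$ bound, yielding the dichotomy at $T^*$ --- is correct as stated.
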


\begin{proof}[Proof of Proposition~\ref{prop:cauchy}] As emphasized above, the nonlinearity in \eqref{eq} is
  both $\alpha$-H\"older continuous (for any $\alpha\in (0,1)$) and
  upper Lipschitz continuous. Therefore, Theorem~\ref{theo:unicite}
  implies uniqueness, and Theorem~\ref{theo:existence} yields a
  (unique) maximal solution $u\in C([0,T^*)\times\R)$. 
  
  We conclude
  by showing that the
  solution is global ($T^*=\infty$) 
  thanks to a suitable a priori estimate. The solution of the ODE \eqref{ode} starting from $\Vert u_0\Vert _{L^{\infty}}$, namely
\begin{equation*}
  \overline u(t) = e^{\ln \Vert u_0\Vert
_{L^{\infty}}e^{2\lambda t}}.
\end{equation*}
is a super-solution, while the zero function is obviously a
sub-solution. Theorem~\ref{theo:comparaison} implies that  
\begin{equation*}
 0\le  u(t,x)\le \overline u(t),\quad \forall t\in [0,T^*).
\end{equation*}
We conclude that $T^*=\infty$, and the result follows.
\end{proof}

\begin{cor}[Initial data comparable to 1]\label{cor:comp-1}
  Let $u_0 \in {\rm BPC}^2(\R)$, $u_0\ge 0$, and $\ep\in (0,1)$.  
  \begin{itemize}
  \item If $u_0(x)\ge  1+\ep$ for all $x\in \R$, then $u$ grows at
    least like a double exponential in time: 
    \begin{equation*}
      u(t,x) \ge e^{\ln(1+\ep)e^{2\lambda t}},\quad \forall t\ge 0, \
      \forall x\in \R.
    \end{equation*}
\item If $u_0(x)\le  1-\ep$ for all $x\in \R$, then $u$ decays at
    least like a double exponential in time: 
    \begin{equation*}
      u(t,x) \le e^{\ln(1-\ep)e^{2\lambda t}},\quad \forall t\ge 0,\
      \forall x\in \R.
    \end{equation*}
  \end{itemize}
\end{cor}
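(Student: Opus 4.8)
The plan is to read this off from the comparison principle, Theorem~\ref{theo:comparaison}, by using the explicit ODE solution \eqref{sol-ode} as a spatially homogeneous sub- or super-solution. The essential observation is that any function of the form $w(t,x)=n(t)$ that is constant in $x$ automatically has $w_{xx}\equiv 0$, so that $w_t-w_{xx}-f(w)=n'(t)-2\lambda\, n\ln n$, which vanishes precisely when $n$ solves the ODE \eqref{ode}. Recall that $f(u)=\lambda u\ln(u^2)$ was shown above to be upper Lipschitz continuous for $\lambda>0$, so Theorem~\ref{theo:comparaison} applies, and that $u$ exists globally by Proposition~\ref{prop:cauchy}.

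For the growth statement, first I would set $\underline u(t,x):=e^{\ln(1+\ep)e^{2\lambda t}}$, i.e.\ the solution of \eqref{ode} issued from $n_0=1+\ep$. By the observation above it is an exact (hence regular) sub-solution: the defining differential inequality holds with equality, the function is smooth and, being independent of $x$, is trivially uniformly bounded as $|x|\to\infty$ on every $[0,T]$. Since $\underline u(0,x)=1+\ep\le u_0(x)$ by hypothesis, Theorem~\ref{theo:comparaison} yields $u(t,x)\ge \underline u(t,x)$ for all $(t,x)$, which is the announced lower bound.

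The decay statement is entirely symmetric: I would take $\overline u(t,x):=e^{\ln(1-\ep)e^{2\lambda t}}$, the solution of \eqref{ode} issued from $n_0=1-\ep<1$ (so that $\ln(1-\ep)<0$ and $\overline u$ decays to $0$). The same computation shows it is a regular super-solution, and $\overline u(0,x)=1-\ep\ge u_0(x)$, so comparison gives $u(t,x)\le \overline u(t,x)$.

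There is no real obstacle here; the only point requiring (minimal) care is to verify that these constant-in-$x$ profiles genuinely meet the regularity and boundedness requirements in the definition of regular sub-/super-solutions, after which the conclusion is immediate from Theorem~\ref{theo:comparaison}. The mild subtlety worth flagging is that in the growth case the sub-solution $\underline u$ is itself unbounded as $t\to\infty$; this is harmless because the comparison theorem is applied on each finite slab $[0,T]\times\R$, on which $\underline u$ is bounded, and $T>0$ is arbitrary.
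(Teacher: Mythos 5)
Your argument is correct and is exactly the one the paper intends: its proof is a one-line appeal to Proposition~\ref{prop:cauchy}, the comparison principle of Theorem~\ref{theo:comparaison}, and the explicit ODE solution \eqref{sol-ode}, which you have simply spelled out by exhibiting the spatially constant ODE solutions as regular sub-/super-solutions. No further comment is needed.
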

\begin{proof}
  This corollary is a straightforward consequence of
  Proposition~\ref{prop:cauchy},  the comparison
  principle (Theorem~\ref{theo:comparaison}), and the ODE case
  \eqref{ode}--\eqref{sol-ode}. 
\end{proof}

\section{Large time behavior: Gaussian data}\label{s:gauss}

Families of Gaussian solutions for nonlinear (and nonlocal) equations can be found in
\cite{B14}, \cite{Alf-Car-14, Alf-Car-17}, in the context of
evolutionary genetics. In the case of a logarithmic nonlinearity, for
the Schr\"odinger equation, it
was observed in \cite{BiMy76} that the flow preserves the Gaussian
structures, and so the resolution of the partial differential equation
boils down to the resolution of ordinary differential equations; see
\cite{CaGa-p} for more details. It is not surprising that the same
holds in the case of \eqref{eq}, and we have indeed:
 
\begin{prop}[Gaussian solutions]\label{prop:gauss} Let $b_0>0$ and $a_0>0$ be given. The solution of \eqref{eq} starting from the Gaussian 
\begin{equation}
\label{gauss-u0}
u_0(x)=b_0 e^{-\frac{a_0}{2}x^2},
\end{equation}
is the Gaussian given by
\begin{equation}
\label{gauss-u}
u(t,x)=b(t)e^{-\frac{a(t)}{2}x^2}:=e^{\psi (t)e^{2\lambda t}}e^{-\frac{a(t)}{2}x^2},
\end{equation}
where
\begin{equation}
\label{psi(t)}
\psi(t)=\ln b_0-\frac{a_0}2\frac{\ln \lambda -\ln (a_0+(\lambda-a_0)e^{-2\lambda t})}{\lambda -a_0},
\end{equation}
with the natural continuation $\psi(t)=\ln b_0 -\frac 1
2(1-e^{-2\lambda t})$ if $a_0=\lambda$, and 
\begin{equation}
\label{a(t)}
a(t)=\lambda \frac{a_0 e^{2\lambda t}}{\lambda -a_0+a_0e^{2\lambda t}}.
\end{equation}
\end{prop}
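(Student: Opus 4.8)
The plan is to exploit the ansatz that Gaussians are preserved by the
flow, reducing the PDE to a coupled system of ODEs for the profile
parameters. First I would substitute $u(t,x)=b(t)e^{-\frac{a(t)}2
  x^2}$ directly into \eqref{eq}. Computing the derivatives gives
$\partial_t u = \(\frac{b'}{b}-\frac{a'}{2}x^2\)u$ and
$\partial_{xx}u = \(a^2 x^2 - a\)u$, while the nonlinearity is
$\lambda u\ln(u^2)=2\lambda u\(\ln b -\frac a2 x^2\)$. Since
$u>0$ everywhere, I may divide through by $u$ and collect the
resulting identity into a polynomial in $x^2$. Matching the constant
term and the coefficient of $x^2$ yields the two scalar ODEs
\begin{equation}
\label{eq:systeme-ab}
\frac{b'}{b}=-a+2\lambda\ln b,\qquad a'=2a^2-2\lambda a.
\end{equation}
The initial conditions are $a(0)=a_0$ and $b(0)=b_0$.

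Next I would solve these two equations in the natural order, since the
$a$-equation decouples. The equation $a'=2a(a-\lambda)$ is a logistic-type
Riccati/Bernoulli ODE whose equilibria are $0$ and $\lambda$; I would
solve it explicitly (for instance by separating variables via partial
fractions, or by the substitution $w=1/a$, which linearizes it) and
check that the solution is exactly \eqref{a(t)}. Note in particular
that $a_0>0$ forces $a(t)>0$ for all $t\ge 0$, so the Gaussian remains
a genuine Gaussian and never degenerates. With $a(t)$ known, the
$b$-equation becomes linear in $\ln b$: writing
$\phi(t)=\ln b(t)$, \eqref{eq:systeme-ab} reads
$\phi'=-a(t)+2\lambda\phi$, a first-order linear ODE. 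I would solve it
by the integrating factor $e^{-2\lambda t}$, which reduces the problem
to integrating $e^{-2\lambda t}a(t)$. The claimed form
$b(t)=e^{\psi(t)e^{2\lambda t}}$ amounts precisely to the statement
$\phi(t)=\psi(t)e^{2\lambda t}$, and substituting this into the linear
equation converts it into $\psi'(t)e^{2\lambda t}=-a(t)$, i.e.
$\psi'(t)=-a(t)e^{-2\lambda t}$, which integrates directly.

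The main obstacle is the explicit integration of
$\psi'(t)=-a(t)e^{-2\lambda t}$. Inserting \eqref{a(t)} gives
$\psi'(t)=-\dfrac{\lambda a_0}{\lambda-a_0+a_0 e^{2\lambda t}}$, and
the antiderivative is a logarithm of the denominator; carrying out
this computation and imposing $\psi(0)=\ln b_0$ should reproduce
\eqref{psi(t)}. The genuine care here is the degenerate case
$a_0=\lambda$, where the denominator $\lambda-a_0$ in \eqref{psi(t)}
vanishes: there the integrand simplifies to a pure exponential,
$\psi'(t)=-e^{-2\lambda t}$, giving the stated continuation
$\psi(t)=\ln b_0-\frac12(1-e^{-2\lambda t})$; equivalently one recovers
it as the limit $a_0\to\lambda$ of \eqref{psi(t)} by L'Hôpital. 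Finally,
since the constructed Gaussian is by inspection a bona fide solution in
the sense of Definition~\ref{def:sol-log}, the uniqueness part of
Proposition~\ref{prop:cauchy} guarantees it is \emph{the} solution, so
no further compactness argument is needed.
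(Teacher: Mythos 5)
Your plan follows exactly the paper's route: plug in the Gaussian ansatz, match the $x^0$ and $x^2$ coefficients, solve the decoupled equation for $a$, then the linear equation for $\phi=\ln b$ (your substitution $\phi=\psi e^{2\lambda t}$ is just the integrating-factor computation written differently), and the $\psi$-part of your computation does reproduce \eqref{psi(t)}. There is, however, a sign error in your equation for $a$: matching the $x^2$ coefficients gives $-\frac{a'}{2}=a^2-\lambda a$, hence $a'=2\lambda a-2a^2=2a(\lambda-a)$ --- the logistic equation of the paper --- and \emph{not} $a'=2a^2-2\lambda a$ as you wrote. The formula \eqref{a(t)} solves the former but not the latter: your equation has solution $a(t)=\lambda a_0/\bigl((\lambda-a_0)e^{2\lambda t}+a_0\bigr)$, which tends to $0$ when $a_0<\lambda$ and blows up in finite time when $a_0>\lambda$, so the ``check against \eqref{a(t)}'' you defer would fail as stated. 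With the sign corrected everything downstream goes through, and positivity of $a$ is even easier than you claim, since $a(t)$ stays trapped between $\min(a_0,\lambda)$ and $\max(a_0,\lambda)$. A second, smaller slip: in the degenerate case $a_0=\lambda$ the integrand is $\psi'(t)=-\lambda e^{-2\lambda t}$, not $-e^{-2\lambda t}$; without the factor $\lambda$ you would obtain $\ln b_0-\frac{1}{2\lambda}(1-e^{-2\lambda t})$ instead of the stated continuation. Your closing observation that uniqueness from Proposition~\ref{prop:cauchy} upgrades the explicit Gaussian to \emph{the} solution is a good point that the paper leaves implicit.
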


\begin{proof}
We plug the ansatz \eqref{gauss-u} into equation \eqref{eq}, we identify the $x^0$ and the $x^2$ coefficients to obtain two ordinary differential equations. The first one is the logistic equation
$$
a'(t)=2a(t)(\lambda -a(t)), 
$$
whose solution, starting from $a(0)=a_0$, is given by \eqref{a(t)}. The second one is
$$
b'(t)=2\lambda b(t)\ln b(t)-a(t)b(t).
$$
Denoting $\phi(t):=\ln b(t)$ the above is recast
$$
\phi'(t)=2\lambda \phi(t)-a(t),
$$
whose solution, starting from $\phi(0)=\ln b_0$, is
\begin{equation}
\label{phi(t)}
\phi(t)=\left(\ln b_0 -\int _0^t e^{-2\lambda s}a(s)ds\right)e^{2\lambda t}.
\end{equation}
Next, using \eqref{a(t)} we get
\begin{align*}
\int _0^t e^{-2\lambda s}a(s)ds&= \int _0  ^{t} \frac{\lambda a_0 e^{-2\lambda s}}{a_0+(\lambda -a_0)e^{-2\lambda s}}ds
\\
&=\left\{
  \begin{aligned}
    &\frac{a_0}{-2(\lambda -a_0)}\left(\ln\left(a_0+(\lambda
        -a_0)e^{-2\lambda t}\right)-\ln \lambda\right)\quad \text{ if
    } a_0\neq \lambda \\
& \frac 1 2 (1-e^{-2\lambda t}) \quad \text{ if } a_0=\lambda,
  \end{aligned}
 \right.
\end{align*}
which we plug into \eqref{phi(t)} to get \eqref{psi(t)}.
 \end{proof}
 
 Clearly, the sign of $\psi_\infty:=\lim _{t\to \infty}\psi(t)$
 decides between (superexponential) decay and growth of the Cauchy
 problem starting from a Gaussian data, the critical case $\psi
 _\infty=0$ leading to convergence to the steady state. 
 
 \begin{cor}[Gaussian data: three scenarii]\label{cor:supersonic-gauss} Let $b_0>0$ and $a_0>0$ be given. Define
 \begin{equation}
 \label{psi-infini}
 \psi_\infty:=\ln b_0 -\frac{a_0}{2}\frac{\ln \lambda - \ln a_0}{\lambda -a_0},
 \end{equation}
 with the natural continuation $\psi _\infty= \ln b_0-\frac 12$ if $a_0=\lambda$.
 Denote by $u(t,x)$ the Gaussian solution of Proposition \ref{prop:gauss}.
 \begin{itemize}
\item [(i)] If $\psi _\infty <0$, then there is superexponential
  decay in the sense that 
$$
\Vert u(t,\cdot)\Vert _{L^{\infty}}=\max _{x\in \R} u(t,x) \Eq t\infty
e^{\frac 12}e^{\psi_\infty e^{2\lambda t}}, \quad \Vert
u(t,\cdot)\Vert _{L^1}\Eq t \infty \sqrt{\frac{2\pi}{\lambda}}e^{\frac 1 2} e^{\psi_\infty e^{2\lambda t}}.
$$

\item [(ii)]  If $\psi _\infty =0$, then there is convergence to the steady state of Proposition~\ref{prop:steady}  in the sense that
$$
\Vert u(t,\cdot)-\varphi \Vert _{L^{\infty}} +\Vert u(t,\cdot)-\varphi \Vert _{L^{1}} \Tend t \infty 0.
$$

\item [(iii)]  If $\psi _\infty >0$, then there is superexponential growth in the sense that, for all $R>0$,
$$
\min _{\vert x\vert \leq R} u(t,x) \Eq t\infty  e^{\frac
  12}e^{\psi_\infty e^{2\lambda t}}e^{-\frac{\lambda}{2}R^{2}}, \quad
\Vert u(t,\cdot)\Vert _{L^1}\Eq t \infty  \sqrt{\frac{2\pi}{\lambda}} e^{\frac 12}e^{\psi_\infty e^{2\lambda t}}.
$$
 \end{itemize}
  \end{cor}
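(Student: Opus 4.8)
The plan is to exploit the fully explicit Gaussian form $u(t,x)=b(t)e^{-a(t)x^2/2}$ supplied by Proposition~\ref{prop:gauss}, so that every quantity in the statement is computed in closed form and reduced to the asymptotics of the two scalar functions $a(t)$ and $\psi(t)$. Since $a(t)>0$, the profile is maximized at $x=0$, whence $\|u(t,\cdot)\|_{L^\infty}=b(t)=e^{\psi(t)e^{2\lambda t}}$; the Gaussian integral gives $\|u(t,\cdot)\|_{L^1}=b(t)\sqrt{2\pi/a(t)}$; and, $u$ being decreasing in $|x|$, one has $\min_{|x|\le R}u(t,x)=b(t)e^{-a(t)R^2/2}$. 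Everything then hinges on two elementary limits. From \eqref{a(t)} I would read off $a(t)\to\lambda$ as $t\to\infty$ (with $a(t)<\lambda$ when $a_0<\lambda$ and $a(t)>\lambda$ when $a_0>\lambda$), which already yields $\sqrt{2\pi/a(t)}\to\sqrt{2\pi/\lambda}$ and $e^{-a(t)R^2/2}\to e^{-\lambda R^2/2}$.

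The second, and genuinely central, limit is the claim that $(\psi(t)-\psi_\infty)e^{2\lambda t}\to \tfrac12$. I would establish it by subtracting \eqref{psi-infini} from \eqref{psi(t)} to obtain
\[
\psi(t)-\psi_\infty=\frac{a_0}{2(\lambda-a_0)}\ln\!\left(1+\frac{\lambda-a_0}{a_0}e^{-2\lambda t}\right),
\]
and then applying $\ln(1+s)\sim s$ as $s\to0$ with $s=\tfrac{\lambda-a_0}{a_0}e^{-2\lambda t}$; the factor $(\lambda-a_0)$ cancels and the limit $\tfrac12$ survives, the degenerate case $a_0=\lambda$ being checked directly from the stated natural continuation. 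This single computation produces the universal prefactor $e^{1/2}$ throughout the statement and explains why it coincides with the height of the steady state $\varphi$ in \eqref{def-varphi}.

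With these two limits in hand, cases (i) and (iii) are immediate. Writing $b(t)=e^{\psi_\infty e^{2\lambda t}}\cdot e^{(\psi(t)-\psi_\infty)e^{2\lambda t}}$, the last factor tends to $e^{1/2}$, so $\|u(t,\cdot)\|_{L^\infty}\Eq t\infty e^{1/2}e^{\psi_\infty e^{2\lambda t}}$ and $\|u(t,\cdot)\|_{L^1}\Eq t\infty \sqrt{2\pi/\lambda}\,e^{1/2}e^{\psi_\infty e^{2\lambda t}}$, irrespective of the sign of $\psi_\infty$; combining with $\min_{|x|\le R}u(t,x)=b(t)e^{-a(t)R^2/2}$ gives the growth equivalent of (iii). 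The sign of $\psi_\infty$ merely dictates whether $e^{\psi_\infty e^{2\lambda t}}$ decays or grows, so (i) and (iii) are proved by the very same equivalents.

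The only case requiring a true estimate, and hence the main (if mild) obstacle, is the convergence (ii) when $\psi_\infty=0$: here $b(t)\to e^{1/2}$ and $a(t)\to\lambda$, so $u(t,\cdot)\to\varphi$ pointwise, and the task is to upgrade this to uniform and $L^1$ convergence. I would split
\[
u(t,x)-\varphi(x)=\big(b(t)-e^{1/2}\big)e^{-a(t)x^2/2}+e^{1/2}\big(e^{-a(t)x^2/2}-e^{-\lambda x^2/2}\big).
\]
The first term is controlled by $|b(t)-e^{1/2}|\to0$ in both norms (using $\int_\R e^{-a(t)x^2/2}\,dx=\sqrt{2\pi/a(t)}$ for $L^1$). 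For the second, the mean value theorem in the width parameter gives $|e^{-ax^2/2}-e^{-\lambda x^2/2}|=|a-\lambda|\tfrac{x^2}{2}e^{-\xi x^2/2}$ with $\xi$ between $a$ and $\lambda$; since $a(t)$ is eventually bounded below away from $0$, the map $x\mapsto\tfrac{x^2}{2}e^{-\xi x^2/2}$ is uniformly bounded, yielding the uniform bound $O(|a(t)-\lambda|)\to0$, while in $L^1$ the definite sign of $e^{-a(t)x^2/2}-e^{-\lambda x^2/2}$ (constant for large $t$ because $a(t)-\lambda$ keeps a fixed sign) reduces the integral to $|\sqrt{2\pi/a(t)}-\sqrt{2\pi/\lambda}|\to0$. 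This closes case (ii).
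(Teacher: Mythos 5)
Your proposal is correct and follows essentially the same route as the paper, whose proof consists precisely of the asymptotic expansion $\psi(t)=\psi_\infty+\tfrac12 e^{-2\lambda t}+\mathcal O(e^{-4\lambda t})$ (your limit $(\psi(t)-\psi_\infty)e^{2\lambda t}\to\tfrac12$) followed by the straightforward estimates you carry out explicitly, including the closed-form expressions for the $L^\infty$, $L^1$ and $\min_{|x|\le R}$ quantities and the splitting used for case (ii).
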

  
  \begin{proof}
  One just has to use the asymptotic expansion
  $\psi(t)=\psi_\infty +\frac 1 2 e^{-2\lambda t}+\mathcal O(e^{-4\lambda t})$ as $t\to \infty$, and perform straightforward estimates.
\end{proof}

In view of the comparison principle (Theorem~\ref{theo:comparaison})
and of Corollary~\ref{cor:supersonic-gauss}, we infer:

\begin{cor}[Initial data comparable to a Gaussian]\label{cor:comp-gauss}
  Let $u_0 \in {\rm BPC}^2(\R)$, $u_0\ge 0$, and $\ep\in (0,1)$.  Let
  $a_0,b_0>0$, and denote again
\begin{equation*}
 \psi_\infty=\ln b_0 -\frac{a_0}{2}\frac{\ln \lambda - \ln a_0}{\lambda -a_0},
 \end{equation*}
with the natural continuation $\psi _\infty= \ln b_0-\frac 12$ if $a_0=\lambda$.
  \begin{itemize}
  \item If $\psi_\infty<0$ and $u_0(x)\le b_0e^{-a_0x^2/2}$, then $u$
      decays at least like a double exponential in time,
$$
\Vert u(t,\cdot)\Vert _{L^{\infty}}\le 2 e^{\frac
  12}e^{\psi_\infty e^{2\lambda t}}, \quad \Vert u(t,\cdot)\Vert
_{L^1}\le \sqrt{\frac{4\pi}{\lambda}}e^{\frac 1 2} e^{\psi_\infty
  e^{2\lambda t}}, \quad \text{ as } t\to\infty. 
$$

\item If $\psi_\infty>0$ and $u_0(x)\ge b_0e^{-a_0x^2/2}$, then $u$
   grows locally at least like a double exponential in time: for all $R>0$,
$$
\min _{\vert x\vert \leq R} u(t,x) \ge \frac{1}{2} e^{\frac
  12}e^{\psi_\infty e^{2\lambda t}}e^{-\frac{\lambda}{2}R^{2}}, \quad
\Vert u(t,\cdot)\Vert _{L^1}\ge \sqrt{\frac{\pi}{\lambda}} e^{\frac
  12}e^{\psi_\infty e^{2\lambda t}}, \quad \text{ as } t\to\infty. 
$$
  \end{itemize}
\end{cor}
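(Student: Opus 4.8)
The plan is to use the exact Gaussian solutions furnished by Proposition~\ref{prop:gauss} as barriers, and then to transfer their sharp asymptotics, recorded in Corollary~\ref{cor:supersonic-gauss}, to the general solution $u$ by the comparison principle. First I would introduce $v(t,x)=b(t)e^{-a(t)x^2/2}$, the Gaussian solution of \eqref{eq} starting from $v_0(x):=b_0e^{-a_0x^2/2}$, with $a(t),b(t)$ as in Proposition~\ref{prop:gauss}; note that its associated $\psi_\infty$ is exactly the quantity appearing in the statement. Because $v$ is a genuine solution, it is simultaneously a regular sub- and super-solution, and it is manifestly uniformly bounded (indeed decaying) as $|x|\to\infty$; likewise $u$ is its own sub- and super-solution. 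Since the nonlinearity $f(u)=\lambda u\ln(u^2)$ was shown in the Example to be upper Lipschitz continuous, Theorem~\ref{theo:comparaison} is available.

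In the decay case ($\psi_\infty<0$), the hypothesis $u_0\le v_0$ together with Theorem~\ref{theo:comparaison} (applied with $\underline u=u$ and $\overline u=v$) yields $0\le u(t,x)\le v(t,x)$ for all $(t,x)$, whence $\|u(t,\cdot)\|_{L^\infty}\le\|v(t,\cdot)\|_{L^\infty}$ and $\|u(t,\cdot)\|_{L^1}\le\|v(t,\cdot)\|_{L^1}$. It then suffices to insert the equivalences of Corollary~\ref{cor:supersonic-gauss}(i), namely $\|v(t,\cdot)\|_{L^\infty}\Eq t\infty e^{1/2}e^{\psi_\infty e^{2\lambda t}}$ and $\|v(t,\cdot)\|_{L^1}\Eq t\infty \sqrt{2\pi/\lambda}\,e^{1/2}e^{\psi_\infty e^{2\lambda t}}$: a relation $g\Eq t\infty h$ forces $g(t)\le 2h(t)$ (resp. $g(t)\le\sqrt2\,h(t)$) for $t$ large, so the stated bounds follow with the constants $2$ and $\sqrt{4\pi/\lambda}=\sqrt2\cdot\sqrt{2\pi/\lambda}$.

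In the growth case ($\psi_\infty>0$), the reverse ordering $u_0\ge v_0$ gives, through Theorem~\ref{theo:comparaison} (now $\underline u=v$, $\overline u=u$), $u(t,x)\ge v(t,x)\ge0$, hence $\min_{|x|\le R}u(t,x)\ge\min_{|x|\le R}v(t,x)$ and $\|u(t,\cdot)\|_{L^1}\ge\|v(t,\cdot)\|_{L^1}$. One then invokes Corollary~\ref{cor:supersonic-gauss}(iii) and converts its equivalences into lower bounds with the safety factors $\tfrac12$ and $1/\sqrt2$, producing the prefactor $\tfrac12 e^{1/2}$ in front of $e^{-\lambda R^2/2}$ and the constant $\sqrt{\pi/\lambda}=\sqrt{2\pi/\lambda}/\sqrt2$ for the $L^1$ lower bound. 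I do not expect any serious obstacle here: the proof is essentially an application of the already-established comparison principle to a Gaussian barrier, and the only two points deserving explicit attention are the admissibility of that barrier in Theorem~\ref{theo:comparaison} (automatic, since $v$ is a bounded exact solution) and the elementary passage from the asymptotic equivalences to honest inequalities valid for large $t$, which is precisely the step that fixes the explicit constants $2$, $\sqrt{4\pi/\lambda}$, $\tfrac12$ and $\sqrt{\pi/\lambda}$.
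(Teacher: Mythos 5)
Your argument is correct and coincides with the paper's (the corollary is stated there as an immediate consequence of Theorem~\ref{theo:comparaison} and Corollary~\ref{cor:supersonic-gauss}, which is exactly the comparison-with-a-Gaussian-barrier route you take). Your explicit tracking of how the asymptotic equivalences yield the constants $2$, $\sqrt{4\pi/\lambda}$, $\tfrac12$ and $\sqrt{\pi/\lambda}$ is a faithful filling-in of the details the paper leaves implicit.
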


\begin{rem}\label{rem:dichotomy}
  Observe that, if
$\psi_\infty=\ln b_0 -\frac{a_0}{2}\frac{\ln \lambda - \ln
  a_0}{\lambda -a_0}=0$, then initial data $(1-\ep)b_0e^{-a_0x^2/2}$
($0<\ep<1$), and $(1+\ep)b_0e^{-a_0x^2/2}$ ($\ep>0)$, fall into the regime
$\psi_\infty<0$ (decay), and $\psi_\infty>0$ (growth),
respectively. Typical examples are $(1-\ep)\varphi(x)$,
$(1+\ep)\varphi(x)$, where $\varphi(x)=e^{\frac12}e^{-\lambda x^2/2}$
is the steady state from Proposition~\ref{prop:steady}. 
\end{rem}

\section{Large time behavior in the case of more general data}\label{s:data}

We have seen that the comparison with the constant initial datum equal
to one leads to a strong dichotomy (Corollary~\ref{cor:comp-1}). The 
same is true by comparison with an initial Gaussian, leading to a
larger variety of initial data (Corollary~\ref{cor:comp-gauss} and
Remark~\ref{rem:dichotomy}). 
Now, we enquire on initial data that can be compared neither to 1 nor
to a Gaussian. In this direction, we can prove superexponential decay
for small initial data. To do so, we need the following standard
estimate, which stems from Young inequality applied to the formula
\begin{equation*}
  e^{t\d_{xx}}v_0(x) = \frac{1}{\sqrt{4\pi t}}\int_{\R}e^{-(x-y)^2/(4t)}v_0(y)dy.
\end{equation*}

\begin{lem}
\label{lem:heat} For any initial data $v_0\in L^1(\R )\cap
L^\infty(\R)$, the solution of the Cauchy problem $\partial _t
v=\partial _{xx}v$, $v_{\mid t=0}=v_0$, satisfies
$$
\Vert v(t,\cdot)\Vert _{L^\infty}\leq V(t):=\min\left(\Vert v_0\Vert _{L^\infty},\frac{\Vert v_0\Vert _{L^1}}{\sqrt{4\pi t}}\right), \quad \text{ for any } t\geq 0.
$$
\end{lem}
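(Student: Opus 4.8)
The plan is to prove the two bounds separately and then take the minimum. The key formula is already given in the statement,
\begin{equation*}
  v(t,x) = \frac{1}{\sqrt{4\pi t}}\int_{\R}e^{-(x-y)^2/(4t)}v_0(y)\,dy,
\end{equation*}
so the whole matter reduces to estimating the $L^\infty$ norm of this convolution in two complementary ways. Writing $v(t,\cdot) = G_t * v_0$ with the heat kernel $G_t(z) = (4\pi t)^{-1/2}e^{-z^2/(4t)}$, I would invoke Young's convolution inequality $\|G_t * v_0\|_{L^r} \le \|G_t\|_{L^p}\|v_0\|_{L^q}$ with the exponent relation $1 + 1/r = 1/p + 1/q$.

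First I would derive the bound $\|v(t,\cdot)\|_{L^\infty} \le \|v_0\|_{L^\infty}$. This is the choice $r=\infty$, $q=\infty$, $p=1$, so that $\|v(t,\cdot)\|_{L^\infty} \le \|G_t\|_{L^1}\|v_0\|_{L^\infty}$. Since $G_t$ is a probability density, $\|G_t\|_{L^1} = \int_{\R} G_t(z)\,dz = 1$, giving $\|v(t,\cdot)\|_{L^\infty} \le \|v_0\|_{L^\infty}$ immediately. (Equivalently, one notes that the maximum principle for the heat equation gives this contraction directly.)

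Next I would derive the bound $\|v(t,\cdot)\|_{L^\infty} \le \|v_0\|_{L^1}/\sqrt{4\pi t}$. This is the choice $r=\infty$, $q=1$, $p=\infty$, so that $\|v(t,\cdot)\|_{L^\infty} \le \|G_t\|_{L^\infty}\|v_0\|_{L^1}$. Here $\|G_t\|_{L^\infty} = G_t(0) = 1/\sqrt{4\pi t}$, which yields the claimed dispersive bound. Since both inequalities hold simultaneously for every $t>0$, the $L^\infty$ norm is bounded by the minimum of the two right-hand sides, which is exactly $V(t)$; the case $t=0$ being trivial as $V(0)=\|v_0\|_{L^\infty}$.

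There is no real obstacle here: the lemma is a textbook application of Young's inequality (or, for the first bound, of the maximum principle), and the only thing to be careful about is computing the two norms of the Gaussian kernel correctly, namely $\|G_t\|_{L^1}=1$ and $\|G_t\|_{L^\infty}=(4\pi t)^{-1/2}$. The assumption $v_0 \in L^1(\R)\cap L^\infty(\R)$ guarantees that both bounds are finite and that the convolution integral is well defined, so no approximation argument is needed.
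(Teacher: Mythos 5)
Your proof is correct and follows exactly the route the paper indicates: Young's convolution inequality applied to the heat-kernel representation, with the two exponent choices giving the $L^\infty\to L^\infty$ contraction and the $L^1\to L^\infty$ dispersive bound, whose minimum is $V(t)$. The paper gives no further detail than this, so nothing is missing.
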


\begin{theo}[Superexponential decay for small data]
\label{th:donnee-pte} For a nonnegative initial datum $u_0$ in ${\rm
  BPC}^2(\R)$, define $m_\infty:=\Vert u_0\Vert _{L^\infty}$ and
$m_1:=\Vert u_0\Vert _{L^1}$. Assume  
\begin{equation}
\label{donnee-pte}
\begin{aligned}
  \psi_\infty^*:=&\ln m_\infty -\int _{\tau }^{+\infty} \lambda
  e^{-2\lambda s}\ln s \, ds+e^{-2\lambda \tau }\ln \sqrt \tau<0,\\
&\quad \text{ where } \tau:=\left(\frac{m_1}{\sqrt{4\pi}\, m_\infty}\right)^{2}.
\end{aligned}
\end{equation}
Then the solution of \eqref{eq}, starting from $u_0$, is decaying
superexponentially in the sense that 
\begin{equation}
\label{u-donnee-pte}
0\leq u(t,x)\leq \min\left(m_\infty,\frac{m_1}{\sqrt{4\pi t}}\right) e^{\psi(t)e^{2\lambda t}},
\end{equation}
where $\psi(t)\to \psi _\infty^*<0$ as $t\to \infty$.
\end{theo}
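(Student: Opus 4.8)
The plan is to construct an explicit regular super-solution of \eqref{eq} that incorporates the smoothing of the linear heat flow, rather than relying on a spatially homogeneous (pure ODE) comparison. The latter only ``sees'' $m_\infty$ and cannot produce the $L^1$-driven gain $m_1/\sqrt{4\pi t}$ contained in $V(t):=\min(m_\infty,m_1/\sqrt{4\pi t})$ of Lemma~\ref{lem:heat}. Accordingly, let $v(t,x):=e^{t\partial_{xx}}u_0\ge 0$ denote the solution of the free heat equation with datum $u_0$, so that $\|v(t,\cdot)\|_{L^\infty}\le V(t)$ by Lemma~\ref{lem:heat} and $v(t,x)>0$ for $t>0$ (assuming $u_0\not\equiv 0$, the case $u_0\equiv0$ being trivial). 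I would then look for a super-solution of the form $\overline u(t,x):=e^{\Phi(t)}v(t,x)$, with $\Phi(t):=\psi(t)e^{2\lambda t}$ to be chosen and with $\psi(0)=0$ so that $\overline u(0,\cdot)=u_0$.

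The point is that this ansatz makes the diffusion terms cancel exactly. Since $v_t=v_{xx}$, a direct computation gives
\[
\overline u_t-\overline u_{xx}-f(\overline u)=e^{\Phi}v\,\bigl[\dot\Phi-2\lambda\Phi-2\lambda\ln v\bigr],
\]
where $f(u)=2\lambda u\ln u$. As $e^{\Phi}v\ge0$, the super-solution inequality reduces to the pointwise requirement $\dot\Phi-2\lambda\Phi\ge 2\lambda\ln v(t,x)$ for all $x$. Using $\ln v(t,x)\le\ln\|v(t,\cdot)\|_{L^\infty}\le\ln V(t)$ together with the identity $\dot\Phi-2\lambda\Phi=\dot\psi\,e^{2\lambda t}$, it suffices to impose
\[
\dot\psi(t)=2\lambda\, e^{-2\lambda t}\ln V(t),\qquad \psi(0)=0.
\]
Note that $\ln V$ is continuous (the two branches match at the crossover time $\tau$), so $\psi\in C^1$ and $\overline u$ is a genuine regular super-solution, uniformly bounded as $|x|\to\infty$; the corner of $V$ at $\tau$ is harmless precisely because $\overline u$ uses the smooth flow $v$, not $V$.

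With this choice of $\psi$, the comparison principle (Theorem~\ref{theo:comparaison}), applied with the sub-solution $\underline u\equiv0$, yields $0\le u(t,x)\le \overline u(t,x)=e^{\psi(t)e^{2\lambda t}}v(t,x)\le V(t)\,e^{\psi(t)e^{2\lambda t}}$, which is \eqref{u-donnee-pte}. It then remains to identify the limit $\lim_{t\to\infty}\psi(t)=2\lambda\int_0^{\infty} e^{-2\lambda s}\ln V(s)\,ds$ (the integral converges since $\ln V(s)\sim -\tfrac12\ln s$ against the factor $e^{-2\lambda s}$). Splitting at $s=\tau$, where $V\equiv m_\infty$ on $[0,\tau]$ and $V(s)=m_1/\sqrt{4\pi s}$ on $[\tau,\infty)$, and using $m_1/\sqrt{4\pi}=\sqrt\tau\,m_\infty$, an elementary computation collapses to $\lim_{t\to\infty}\psi(t)=\ln m_\infty+e^{-2\lambda\tau}\ln\sqrt\tau-\int_\tau^\infty\lambda e^{-2\lambda s}\ln s\,ds=\psi_\infty^*$, which is negative by hypothesis \eqref{donnee-pte}; then $\psi(t)e^{2\lambda t}\to-\infty$ like $\psi_\infty^* e^{2\lambda t}$, giving the announced superexponential decay.

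The only genuinely delicate point is the one already exploited above: one must resist comparing $u$ directly with the spatially constant barrier $V(t)e^{\Phi(t)}$, which is \emph{not} a super-solution, since dropping the Laplacian destroys the cancellation and leaves an unfavorable extra term $-\dot V/V>0$ (recall $V$ is nonincreasing). Feeding instead the true heat flow $v$ into the barrier forces $v_t=v_{xx}$, removes the diffusion, and turns the super-solution inequality into the scalar ODE for $\psi$. Everything else is routine: nonnegativity and the $L^\infty$ bound on $v$ come from Lemma~\ref{lem:heat}, upper Lipschitz continuity of $f$ (required by Theorem~\ref{theo:comparaison}) is established in the Example, and the asymptotics of $\psi$ follow from the explicit integral.
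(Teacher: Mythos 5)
Your proposal is correct and follows essentially the same route as the paper: the super-solution ansatz $e^{\Phi(t)}v(t,x)$ with $v$ the free heat flow, the reduction via Lemma~\ref{lem:heat} to the linear ODE $\dot\psi=2\lambda e^{-2\lambda t}\ln V(t)$, and the splitting of the resulting integral at $\tau$ to identify $\psi_\infty^*$ are exactly the paper's argument. The additional remarks (positivity of $v$ for $t>0$, why a spatially constant barrier would not do) are sound but not needed beyond what the paper records.
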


\begin{rem} The above criterion provides new initial data leading to
  superexponential decay. For instance, assume that $u_0$ has tails heavier than
  Gaussian (so that domination by a Gaussian cannot be used), that  
\begin{equation}\label{etoile}
1<m_\infty<e^{\int _1^{+\infty} \lambda e^{-2\lambda s}\ln s \, ds},
\end{equation}
(so that domination by the ODE cannot be used), and that
$\tau=1$. Then \eqref{donnee-pte} holds true, hence \eqref{u-donnee-pte}. A typical example could be
  \begin{equation*}
    u_0(x)=  m_\infty e^{-\alpha\(\sqrt{1+x^2}-1\)},
  \end{equation*}
with \eqref{etoile} and $\alpha>0$ adjusted so that 
$\tau=1$. 
\end{rem}

\begin{proof} Following  \cite{Gar-Qui-10} or \cite{Alf-fujita}, we
  look for a supersolution to \eqref{eq} in the form $g(t)v(t,x)$,
  where $g(t)>0$ is to be determined (with $g(0)=1$), and $v(t,x)$ is
  the solution of the heat equation $\partial _t v=\partial _{xx}v$
  with $u_0$ as initial datum. A straightforward computation shows
  that to construct a supersolution,
  it is enough to have  
$$
\frac{g'(t)}{g(t)}-2\lambda \ln g(t)\geq 2\lambda \ln v(t,x).
$$
By  Lemma \ref{lem:heat}, it is therefore enough to have $g(t)=e^{\phi(t)}$ where
$$
\phi'(t)-2\lambda \phi (t)=2\lambda \ln V(t), \quad \phi(0)=0,
$$
that is
$$
\phi(t)=e^{2\lambda t}\int _0 ^{t}2\lambda e^{-2\lambda s}\ln V(s)ds.
$$
Observe that $V(t)=m_\infty$ when $t\leq \tau$ while
$V(t)=\frac{m_1}{\sqrt{4\pi t}}$ when $t\geq \tau$. Cutting the above
integral and performing straightforward computations, we end up  with
$g(t)=e^{\psi(t)e^{2\lambda t}}$, where 
$$
\psi(t):=(\ln m_\infty) (1-e^{-2\lambda \tau})+\ln \frac{m_1}{\sqrt{4\pi}}(e^{-2\lambda \tau}-e^{-2\lambda t})-\int _\tau ^{t}\lambda e^{-2\lambda s}\ln s \, ds,
$$
which tends to $\psi _\infty^*$ as $t\to \infty$. It therefore follows
from the comparison principle (Theorem~\ref{theo:comparaison}) that
$u(t,x)\leq g(t)v(t,x)$, which 
yields \eqref{u-donnee-pte}. 
\end{proof}

In the context of bounded solutions, typically for Lipschitz {\it
  ignition} or {\it bistable} nonlinearities, some threshold results
between extinction and convergence to an equilibrium, say 1, are known
to exist \cite{Zla-06}, \cite{Du-Mat-10}, \cite{Mat-Pol-16}. For
equation \eqref{eq}, we can prove a threshold result between decay and
growth. We first need to construct compactly supported sub-solutions. 

\begin{lem}[High plateaux as sub-solutions]\label{lem:plateaux} Let $L>0$ and $0<\ep<L$ be given. Let $\Theta \in C^\infty([-L,L])\cap {\rm BPC}^2(\R)$ be
  such that 
  \begin{align*}
&\Theta(x)=0\text{ for }|x|\ge L,\\
    & \Theta >0\text{ on }(-L,L),\\
& \Theta(\pm L)=\Theta '(\pm  L)=\Theta ''(\pm L)=0,\\
& \gamma:=\Theta'''(-L)=-\Theta ''' (L)>0,\\
& \Theta \equiv 1 \text{ on }[-L+\ep,L-\ep].
  \end{align*}
Then there is $K_0>1$
  such that, for any $K\geq K_0$, the function $\Theta _K:=K\Theta$
  satisfies 
\begin{equation}\label{sub-plateau}
\Theta _K'' +2\lambda \Theta _K \ln \Theta _K \geq 0,\text{ on }\R.
\end{equation}
Hence, $\Theta _K$ is a sub-solution to \eqref{eq}.  
\end{lem}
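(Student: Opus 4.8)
The plan is to establish the pointwise inequality \eqref{sub-plateau} directly. Since $K>0$, dividing by $K$ and using $\Theta_K\ln\Theta_K=K\Theta(\ln K+\ln\Theta)$ shows that \eqref{sub-plateau} is equivalent to
$$
P_K(x):=\Theta''(x)+2\lambda\,\Theta(x)\ln\Theta(x)+2\lambda\,\Theta(x)\ln K\ge 0,\qquad x\in\R.
$$
Outside $(-L,L)$ every term vanishes (using $\Theta''(\pm L)=0$ and the convention $s\ln s\to 0$ as $s\to 0^+$), so it suffices to check $P_K\ge0$ on $[-L,L]$, which I would split into a central plateau, two boundary layers, and two intermediate collars. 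On the plateau $[-L+\ep,L-\ep]$ one has $\Theta\equiv1$, hence $\Theta''=0$ and $\ln\Theta=0$, so $P_K=2\lambda\ln K\ge0$ as soon as $K\ge1$.

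The crucial zone is the boundary layer, and it is here that the hypothesis $\gamma>0$ does the work. Near $x=L$, set $h:=L-x$; since $\Theta$ is $C^\infty$ on $[-L,L]$ with $\Theta(L)=\Theta'(L)=\Theta''(L)=0$ and $\Theta'''(L)=-\gamma$, Taylor's formula gives $\Theta(x)=\tfrac{\gamma}{6}h^3+o(h^3)$ and $\Theta''(x)=\gamma h+o(h)$ as $h\to0^+$. Consequently $2\lambda\Theta\ln\Theta=O(h^3|\ln h|)$, which is of smaller order than the strictly positive diffusion term $\Theta''\sim\gamma h$. Hence there is $\delta\in(0,\ep)$ such that $\Theta''+2\lambda\Theta\ln\Theta\ge0$ on $[L-\delta,L]$, and symmetrically on $[-L,-L+\delta]$ using $\Theta'''(-L)=\gamma>0$. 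Since $\Theta\ge0$ and $K\ge1$ force $2\lambda\Theta\ln K\ge0$, we obtain $P_K\ge0$ on both boundary layers for every $K\ge1$.

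Finally, on the two intermediate collars $[-L+\delta,-L+\ep]\cup[L-\ep,L-\delta]$ the function $\Theta$ is continuous and strictly positive on a compact set, so $c:=\min\Theta>0$ there, while the (fixed) quantity $\Theta''+2\lambda\Theta\ln\Theta$ is bounded below by some $-M$ with $M>0$. Thus $P_K\ge -M+2\lambda c\ln K$, which is nonnegative once $K\ge e^{M/(2\lambda c)}$. Taking $K_0:=\max\bigl(2,e^{M/(2\lambda c)}\bigr)>1$ then yields $P_K\ge0$ on all of $\R$ for every $K\ge K_0$, that is \eqref{sub-plateau}. Since $\Theta_K\in C^2(\R)$ (the endpoint conditions match the zero extension up to second order) and is time-independent, \eqref{sub-plateau} is exactly the statement that $\Theta_K$ is a stationary sub-solution to \eqref{eq}, which concludes the argument.

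The main obstacle is precisely the boundary layer: there $\ln\Theta\to-\infty$, so the reaction term is negative and, for any fixed $K$, it cannot be rescued by the factor $\ln K$ since $\ln K+\ln\Theta\to-\infty$ as $h\to0^+$. The positivity of $\Theta''$ forced by $\gamma>0$ is what saves the inequality, which explains why the third-derivative hypothesis is imposed; the role of large $K$ is confined to the intermediate collars, where $\Theta$ stays bounded away from both $0$ and $1$.
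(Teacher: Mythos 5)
Your proof is correct and follows essentially the same strategy as the paper's: use the Taylor expansion at $\pm L$ (where $\gamma>0$ makes $\Theta''\sim\gamma h$ dominate the $O(h^3|\ln h|)$ reaction term) to handle a boundary layer of width $\delta$ independently of $K$, then use $\min\Theta>0$ on the remaining compact set to absorb the bounded negative part by $2\lambda\Theta\ln K$ for $K$ large. Your five-piece decomposition merely splits the paper's middle interval $[-L+\delta,L-\delta]$ into plateau and collars, which changes nothing of substance.
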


\begin{proof}
 By our assumption $\Theta (x)\sim \frac 1 6 \gamma (x+L)^3$,
 $\Theta''(x)\sim \gamma (x+L)$ for $ 0<x+L\ll 1$, where we
 thus have 
 $$
 \Theta _K''(x) +2\lambda \Theta _K(x) \ln \Theta _K(x) \geq K(\Theta''(x)+2\lambda \Theta (x)\ln \Theta (x))\sim K\gamma (x+L).
 $$ 
 As result, there is $\delta >0$ such that \eqref{sub-plateau} holds
 on $(-L,-L+\delta)$ and, by symmetry, on $(L-\delta,L)$.

 Next, denoting $\theta ^*:=\min _{-L+\delta \leq x\leq L -\delta}\Theta (x)>0$, we have, for $x\in [-L+\delta,L-\delta]$, 
 \begin{align*}
 \Theta _K''(x) +2\lambda \Theta _K(x) \ln \Theta _K(x)&= K(\Theta''(x)+2\lambda \Theta (x)\ln \Theta (x)+2\lambda \Theta (x) \ln K)\\
&\geq  K(-\Vert \Theta''+2\lambda \Theta \ln \Theta \Vert _{L^{\infty}}+2\lambda \theta ^{*}\ln K)
 \end{align*}
 which is nonnegative if $K>1$ is large enough.
\end{proof}

\begin{theo}[Threshold phenomena for compactly supported
  data]\label{th:threshold} 

Let $0<\ep<L<L'$ be given. Select $K\geq
  K_0>1$, where $K_0$ is given by Lemma~\ref{lem:plateaux}.
Let $u_0\in {\rm BPC}^2(\R)$ be such that $u_0>0$ on $(-L',L')$ and $u_0\equiv
0$ on $(-\infty,-L']\cup[L',+\infty)$. For $M>0$, we denote by
$u_M(t,x)$ the solution of \eqref{eq} starting from $Mu_0$.  
 \begin{itemize}
\item [(i)] There is $M_{\rm decay}>0$ such that, for any $0<M<M_{\rm
    decay}$, the solution $u_M(t,x)$ is decaying superexponentially in time.

\item [(ii)]  There is $M_{\rm growth}>0$ such that, for any $M>M_{\rm
  growth}$, the solution $u_M(t,x)$ grows locally superexponentially
in time, in the sense that 
\begin{equation}
\label{local-blowup}
u_M(t,x)\geq Ke^{\left(\ln \frac{K+1}{K}\right)e^{2\lambda t}}, \quad
\forall x\in [-L+\ep,L-\ep]. 
\end{equation}
 \end{itemize}
\end{theo}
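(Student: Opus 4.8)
My plan is to treat both regimes by comparison, reusing the tools already established. For the decay part (i), I would apply Theorem~\ref{th:donnee-pte} directly to the rescaled datum $Mu_0$. Writing $m_\infty=M\|u_0\|_{L^\infty}$ and $m_1=M\|u_0\|_{L^1}$, the threshold time
$$\tau=\left(\frac{m_1}{\sqrt{4\pi}\,m_\infty}\right)^2=\left(\frac{\|u_0\|_{L^1}}{\sqrt{4\pi}\,\|u_0\|_{L^\infty}}\right)^2$$
does not depend on $M$, so the exponent in \eqref{donnee-pte} takes the form $\psi_\infty^*=\ln M+C_0$, where $C_0$ gathers all the $M$-independent contributions. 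As $M\to 0^+$ this tends to $-\infty$; hence there is $M_{\rm decay}>0$ such that $\psi_\infty^*<0$ for every $0<M<M_{\rm decay}$, and Theorem~\ref{th:donnee-pte} then produces the announced superexponential decay.

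For the growth part (ii), the idea is to upgrade the \emph{stationary} sub-solution $\Theta_K$ of Lemma~\ref{lem:plateaux} into a \emph{time-dependent} one that carries the superexponential growth. I would let $m(t):=e^{(\ln\frac{K+1}{K})e^{2\lambda t}}$ be the solution of the underlying ODE $m'=2\lambda m\ln m$ with $m(0)=\frac{K+1}{K}>1$, and set $\underline u(t,x):=m(t)\,\Theta_K(x)$. To check that this is a sub-solution, I would insert it into \eqref{eq} and exploit the additive splitting $\ln\underline u=\ln m(t)+\ln\Theta_K(x)$ coming from the logarithm, obtaining
$$\underline u_t-\underline u_{xx}-2\lambda\,\underline u\ln\underline u=\big(m'(t)-2\lambda m(t)\ln m(t)\big)\Theta_K(x)-m(t)\big(\Theta_K''(x)+2\lambda\Theta_K(x)\ln\Theta_K(x)\big).$$
The first term vanishes by the choice of $m$, while the second is $\le 0$ by \eqref{sub-plateau}; since $\Theta_K$ is $C^2$ and compactly supported, $\underline u$ is a genuine regular sub-solution.

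It then remains to dominate $\underline u$ at the initial time. Since $[-L,L]\subset(-L',L')$, the continuous datum $u_0$ has a positive minimum $\mu:=\min_{[-L,L]}u_0>0$, while $\underline u(0,\cdot)=(K+1)\Theta$ is supported in $[-L,L]$ and bounded by $(K+1)\|\Theta\|_{L^\infty}$. Choosing $M_{\rm growth}:=(K+1)\|\Theta\|_{L^\infty}/\mu$, I obtain $\underline u(0,\cdot)\le Mu_0$ for every $M\ge M_{\rm growth}$ (the inequality being trivial outside $[-L,L]$). The comparison principle (Theorem~\ref{theo:comparaison}) then yields $u_M(t,x)\ge m(t)\Theta_K(x)$, which on $[-L+\ep,L-\ep]$, where $\Theta\equiv 1$, is precisely \eqref{local-blowup}.

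The hard part is finding the right sub-solution in (ii): it must grow superexponentially on the central plateau and yet remain a sub-solution across the transition layer near $|x|=L$, where both $\Theta$ and $\ln\Theta$ degenerate. The separable ansatz $m(t)\Theta_K(x)$ succeeds precisely because the logarithm turns the product into a sum, so the time factor $\ln m$ reproduces the ODE growth $m'=2\lambda m\ln m$ and cancels cleanly, leaving behind only the stationary inequality \eqref{sub-plateau} already secured by Lemma~\ref{lem:plateaux} with the original constant $K$. Everything else reduces to the elementary $M$-dependence of the thresholds in (i) and of the initial domination in (ii).
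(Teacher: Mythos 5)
Your argument for part (ii) is essentially the paper's own proof: the same separable sub-solution $w(t,x)=n(t)\Theta_K(x)$ with $n$ the ODE solution issued from $\frac{K+1}{K}$, the same cancellation coming from $\ln\(n\Theta_K\)=\ln n+\ln \Theta_K$ which reduces everything to the stationary inequality \eqref{sub-plateau} of Lemma~\ref{lem:plateaux}, and the same choice of $M_{\rm growth}$ through the positive minimum of $u_0$ on $[-L,L]$. For part (i), however, you take a genuinely different and heavier route: you apply Theorem~\ref{th:donnee-pte} to $Mu_0$ and observe that $\tau$ is scale-invariant while $\psi_\infty^*=\ln M+C_0\to-\infty$ as $M\to 0^+$ (this is correct, and $m_1<\infty$ since $u_0$ is compactly supported and bounded). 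The paper instead simply sets $M_{\rm decay}=1/\Vert u_0\Vert_{L^\infty}$, so that $Mu_0\le 1-\ep$ for some $\ep>0$, and concludes by the elementary ODE comparison of Corollary~\ref{cor:comp-1}. Both are valid; your route buys the sharper upper envelope $\min\(m_\infty,\frac{m_1}{\sqrt{4\pi t}}\)e^{\psi(t)e^{2\lambda t}}$, recording the extra algebraic decay contributed by the heat semigroup, at the price of invoking the small-data machinery where a one-line comparison with the constant super-solution suffices.
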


\begin{proof} The first point is a consequence of
  Corollary~\ref{cor:comp-1}, that is comparison with the ODE, provided we choose $M_{\rm
    decay}=1/\|u_0\|_{L^\infty}$.  We now prove $(ii)$.  Since $\min
  _{-L\leq x\leq L} u_0(x)>0$, there is $M_{\rm growth}>0$ such that, for
  all $M\geq M_{\rm growth}$, $Mu_0\geq (K+1) \Theta$. Next we take $n(t)$
  as the solution of the underlying ODE starting from
  $n_0:=\frac{K+1}{K}$, see \eqref{ode} and \eqref{sol-ode}, that is 
$$
n(t)=e^{\left(\ln \frac{K+1}{K}\right)e^{2\lambda t}}.
$$
 Now we define
$$
w(t,x):=\Theta _K(x)n(t)=K\Theta (x) n(t).
$$
We have $w(0,\cdot)=(K+1)\Theta \leq Mu_0$ and 
\begin{align*}
&\partial_t w(t,x)-\partial _{xx}w(t,x)-2\lambda w(t,x)\ln w(t,x)\\
&= \Theta _K (x)n'(t)-\Theta _K ''(x)n(t)-2\lambda \Theta _K (x) n(t)\ln n(t) -2\lambda \Theta _K (x) n(t)\ln \Theta _K (x)\\
&= n(t)(-\Theta _K ''(x)-2\lambda \Theta _K(x)\ln \Theta _k(x))\\
&\leq  0,
\end{align*}
by Lemma \ref{lem:plateaux}. It therefore follows from the comparison principle that $u_M\geq w$. In particular, since $\Theta \equiv 1$ on $[-L+\ep,L+\ep]$, we get \eqref{local-blowup}. \end{proof}

Proposition~\ref{prop:Omega}, concerned with  a bounded domain, is a straightforward consequence of the
above result. One just has to use a translation in space $c\neq 0$ if necessary so that  $c+[-L',L']\subset
(\alpha,\beta)$, and note that the quantities involved in
Theorem~\ref{th:threshold} control the $L^2$ norm on $\Omega$.

\bibliographystyle{amsplain}

\bibliography{biblio}

\def\cprime{$'$}
\providecommand{\bysame}{\leavevmode\hbox to3em{\hrulefill}\thinspace}
\providecommand{\MR}{\relax\ifhmode\unskip\space\fi MR }
\providecommand{\MRhref}[2]{%
  \href{http://www.ams.org/mathscinet-getitem?mr=#1}{#2}
}
\providecommand{\href}[2]{#2}
\begin{thebibliography}{10}

\bibitem{Alf-fujita}
Matthieu Alfaro, \emph{Fujita blow up phenomena and hair trigger effect: the
  role of dispersal tails}, to appear in Ann. Inst. H. Poincar\'e Anal. Non
  Lin\'eaire.

\bibitem{Alf-Car-17}
Matthieu Alfaro and R{\'e}mi Carles, \emph{Replicator-mutator equations with
  quadratic fitness}, to appear in Proc. Amer. Math. Soc.

\bibitem{Alf-Car-14}
\bysame, \emph{Explicit solutions for replicator-mutator equations: extinction
  versus acceleration}, SIAM J. Appl. Math. \textbf{74} (2014), no.~6,
  1919--1934. \MR{3286691}

\bibitem{Aro-Wei-78}
D.~G. Aronson and H.~F. Weinberger, \emph{Multidimensional nonlinear diffusion
  arising in population genetics}, Adv. in Math. \textbf{30} (1978), no.~1,
  33--76. \MR{511740 (80a:35013)}

\bibitem{BiMy75}
Iwo Bia{\l}ynicki-Birula and Jerzy Mycielski, \emph{Wave equations with
  logarithmic nonlinearities}, Bull. Acad. Pol. Sc. \textbf{XXIII} (1975),
  461--466.

\bibitem{BiMy76}
\bysame, \emph{Nonlinear wave mechanics}, Ann. Physics \textbf{100} (1976),
  no.~1-2, 62--93. \MR{0426670 (54 \#14609)}

\bibitem{B14}
Vadim~N. Biktashev, \emph{A simple mathematical model of gradual {D}arwinian
  evolution: emergence of a {G}aussian trait distribution in adaptation along a
  fitness gradient}, J. Math. Biol. \textbf{68} (2014), no.~5, 1225--1248.
  \MR{3175203}

\bibitem{CaGa-p}
R.~Carles and I.~Gallagher, \emph{Universal dynamics for the defocusing
  logarithmic {S}chr\"odinger equation}, preprint. Archived at {\tt
  https://hal.archives-ouvertes.fr/hal-01398526}, 2016.

\bibitem{CaHa80}
T.~Cazenave and A.~Haraux, \emph{\'{E}quations d'\'evolution avec non
  lin\'earit\'e logarithmique}, Ann. Fac. Sci. Toulouse Math. (5) \textbf{2}
  (1980), no.~1, 21--51.

\bibitem{ChLuLi15}
Hua Chen, Peng Luo, and Gongwei Liu, \emph{Global solution and blow-up of a
  semilinear heat equation with logarithmic nonlinearity}, J. Math. Anal. Appl.
  \textbf{422} (2015), no.~1, 84--98. \MR{3263450}

\bibitem{Du-Mat-10}
Yihong Du and Hiroshi Matano, \emph{Convergence and sharp thresholds for
  propagation in nonlinear diffusion problems}, J. Eur. Math. Soc. (JEMS)
  \textbf{12} (2010), no.~2, 279--312. \MR{2608941}

\bibitem{Fuj-66}
Hiroshi Fujita, \emph{On the blowing up of solutions of the {C}auchy problem
  for {$u_{t}=\Delta u+u^{1+\alpha }$}}, J. Fac. Sci. Univ. Tokyo Sect. I
  \textbf{13} (1966), 109--124. \MR{0214914 (35 \#5761)}

\bibitem{Gar-Qui-10}
Jorge Garc{\'{\i}}a-Meli{\'a}n and Fernando Quir{\'o}s, \emph{Fujita exponents
  for evolution problems with nonlocal diffusion}, J. Evol. Equ. \textbf{10}
  (2010), no.~1, 147--161. \MR{2602930 (2011b:35230)}

\bibitem{Ha81}
Alain Haraux, \emph{Nonlinear evolution equations---global behavior of
  solutions}, Lecture Notes in Mathematics, vol. 841, Springer-Verlag,
  Berlin-New York, 1981. \MR{610796}

\bibitem{Hay-73}
Kantaro Hayakawa, \emph{On nonexistence of global solutions of some semilinear
  parabolic differential equations}, Proc. Japan Acad. \textbf{49} (1973),
  503--505. \MR{0338569 (49 \#3333)}

\bibitem{JiYiCa16}
Shanming Ji, Jingxue Yin, and Yang Cao, \emph{Instability of positive periodic
  solutions for semilinear pseudo-parabolic equations with logarithmic
  nonlinearity}, J. Differential Equations \textbf{261} (2016), no.~10,
  5446--5464. \MR{3548258}

\bibitem{Kob-Sir-Tun-77}
Kusuo Kobayashi, Tunekiti Sirao, and Hiroshi Tanaka, \emph{On the growing up
  problem for semilinear heat equations}, J. Math. Soc. Japan \textbf{29}
  (1977), no.~3, 407--424. \MR{0450783 (56 \#9076)}

\bibitem{Mat-Pol-16}
H.~Matano and P.~Pol\'a\v{c}ik, \emph{Dynamics of nonnegative solutions of
  one-dimensional reaction-diffusion equations with localized initial data.
  {P}art {I}: {A} general quasiconvergence theorem and its consequences}, Comm.
  Partial Differential Equations \textbf{41} (2016), no.~5, 785--811.
  \MR{3508322}

\bibitem{Me13}
J.~C. Meyer, \emph{Theoretical aspects of the {C}auchy problem for
  non-{L}ipschitz semi-linear parabolic partial differential equations}, Ph.D.
  thesis, University of Birmingham, 2013.

\bibitem{MeNe15}
J.~C. Meyer and D.~J. Needham, \emph{The {C}auchy problem for non-{L}ipschitz
  semi-linear parabolic partial differential equations}, London Mathematical
  Society Lecture Note Series, vol. 419, Cambridge University Press, Cambridge,
  2015. \MR{3444406}

\bibitem{Wei-81}
Fred~B. Weissler, \emph{Existence and nonexistence of global solutions for a
  semilinear heat equation}, Israel J. Math. \textbf{38} (1981), no.~1-2,
  29--40. \MR{599472 (82g:35059)}

\bibitem{Zla-06}
Andrej Zlato\v{s}, \emph{Sharp transition between extinction and propagation of
  reaction}, J. Amer. Math. Soc. \textbf{19} (2006), no.~1, 251--263.
  \MR{2169048}

\end{thebibliography}

\end{document}